\newtheorem{proposition}{Proposition}[section]
\newtheorem{theorem}[proposition]{Theorem}
\newtheorem{lemma}[proposition]{Lemma}
\theoremstyle{definition}
\newtheorem{definition}[proposition]{Definition}
\newtheorem{deft}[proposition]{Definition}
\theoremstyle{remark}
\newtheorem{remark}[proposition]{Remark}
\newcommand{\Pol}{\textup{Pol}}
\newcommand{\ot}{\otimes}
\newcommand{\Com}{\Delta}
\newcommand{\cst}{\ifmmode\mathrm{C}^*\else{$\mathrm{C}^*$}\fi}
\newcommand{\CC}{\mathbb{C}}
\newcommand{\NN}{\mathbb{N}}
\newcommand{\ZZ}{\mathbb{Z}}
\newcommand{\bz}{\mathbb{Z}}
\newcommand{\bc}{\mathbb{C}}
\newcommand{\QG}{\mathbb{G}}
\newcommand{\QH}{\mathbb{H}}
\newcommand{\tens}{\otimes}
\newcommand{\id}{\mathrm{id}}
\newcommand{\comp}{\!\circ\!}
\newcommand{\ph}{\varphi}
\newcommand{\I}{\mathds{1}}
\newcommand{\blg}{\mathsf{B}}
\newcommand{\alg}{\mathsf{A}}
\newcommand{\sS}{\mathsf{S}}
\newcommand{\sA}{\mathsf{A}}
\newcommand{\sB}{\mathsf{B}}
\newcommand{\sH}{\mathsf{H}}
\newcommand{\cA}{\mathscr{A}}
\newcommand{\hh}[1]{\widehat{#1}}
\renewcommand{\Bar}[1]{\overline{#1}}
\newcommand{\sn}{\overset{\scriptscriptstyle{n}}{s}}
\newcommand{\cn}{\overset{\scriptscriptstyle{n}}{c}}
\newcommand{\taun}{\overset{\scriptscriptstyle{n}}{\tau}}
\newcommand{\sigman}{\overset{\scriptscriptstyle{n}}{\sigma}}
\newcommand{\snn}{\overset{\scriptscriptstyle{n+1}}{s}\!\!}
\newcommand{\taunn}{\overset{\scriptscriptstyle{n+1}}{\tau}\!\!\!}
\newcommand{\sigmann}{\overset{\scriptscriptstyle{n+1}}{\sigma}\!\!}
\newcommand{\sigmarnn}{\overset{\scriptscriptstyle{2n+1}}{\sigma}\!\!}
\newcommand{\skk}{\overset{\scriptscriptstyle{2k}}{s}\!}
\newcommand{\skkj}{\overset{\scriptscriptstyle{2k+1}}{s}\!\!}
\newcommand{\pr}{\mathrm{pr}}
\newcommand{\ev}{{\scriptscriptstyle\text{\rm{even}}}}
\newcommand{\odd}{{\scriptscriptstyle\text{\rm{odd}}}}
\newcommand{\Ind} {\mathcal{I}}
\newcommand{\catV}{\mathcal{C}_{\sA,\mathcal{V}}}
\newcommand{\Alg}{\mathscr{A}}
\DeclareMathOperator{\QI}{\mathsf{QISO}}
\DeclareMathOperator{\C}{C}
\DeclareMathOperator{\Lin}{Lin}
\newcommand{\Zpn}{\ZZ/_{p^n \ZZ}}
\newcommand{\Zpnplus}{\ZZ/_{p^{n+1} \ZZ}}
\newcommand{\Pad}{\ZZ_p}
\newcommand{\Zl}{\ZZ/_{l \ZZ}}
\numberwithin{equation}{section}
\begin{document}

\author{Adam Skalski}
\address{Institute of Mathematics of the Polish Academy of Sciences,
ul.~\'Sniadeckich 8, 00--956 Warszawa, Poland}
\email{a.skalski@impan.pl}

\author{Piotr M.~So{\l}tan} \address{Department of Mathematical Methods in Physics, Faculty of Physics, University of Warsaw, Poland}
\email{piotr.soltan@fuw.edu.pl}

\thanks{Partially supported by National Science Centre (NCN) grant no.~2011/01/B/ST1/05011}

\title[Limits of quantum symmetry groups]{Projective limits of quantum symmetry groups and the doubling construction for Hopf algebras}

\keywords{Quantum symmetry groups; doubling of Hopf algebras; inductive/projective limits}
\subjclass[2010]{Primary 46L65; Secondary 16T05, 16T30, 58B32}

\begin{abstract}
The quantum symmetry group of the inductive limit of \cst-algebras equipped with orthogonal filtrations is shown to be the projective limit of the quantum symmetry groups of the \cst-algebras appearing in the sequence. Some explicit examples of such projective limits are studied, including the case of quantum symmetry groups of the duals of finite symmetric groups, which do not fit directly into the framework of the main theorem and require further specific study. The investigations reveal a deep connection between quantum symmetry groups of discrete group duals and the doubling construction for Hopf algebras.
\end{abstract}

\maketitle

Within the last fifteen years the fundamental notion of a symmetry group of a given structure (a set, a graph, a finite or compact metric space, a compact Riemannian manifold) has been successfully generalised to the context of \emph{compact quantum groups} of S.L.Woronowicz. The history of these developments started in the paper \cite{WangSym}, where S.\,Wang defined and began to study quantum symmetry groups of finite-dimensional \cst-algebras. Soon after that T.\,Banica, J.\,Bichon and others expanded this study to quantum symmetry groups of various finite structures, such as (coloured) graphs or metric spaces (see references below). In 2009 D.\,Goswami proposed a definition of a quantum isometry group of an admissible spectral triple, the latter playing the role of a noncommutative compact Riemannian manifold. Particular examples of Goswami's quantum isometry groups are those associated to duals of finitely generated groups, first studied in \cite{bhs} and later investigated in \cite{bsk1}, \cite{Symmetric} and \cite{bsk2}. These examples, together with Goswami's definition, motivated the introduction of the concept of a quantum symmetry group of a $\cst$-algebra equipped with an orthogonal filtration (\cite{orth}), which plays a fundamental role in this article. Note at the same time that this concept has been very recently generalized in the preprint \cite{Manon} to include quantum symmetry groups of a $\cst$-\emph{module} equipped with an orthogonal filtration.

The quantum symmetry groups arising in the contexts described above have been studied from a variety of algebraic and analytic points of view. In the current article we investigate two specific topics related to such quantum groups. The first is related to the concept of a doubling of a given compact quantum group with respect to an order two automorphism, formally introduced in \cite{Symmetric}. Such doublings seem to appear often as quantum symmetry groups of duals of finitely generated groups (see \cite{bhs}, \cite{Symmetric}, \cite{Chin}); here we explain some reasons behind that and provide further examples. The second, more important, topic is that of projective limits of quantum symmetry groups. It was investigated in the context of quantum isometry groups, especially those related to Bratteli diagrams, in \cite{bgs}; here we study it systematically in the framework of quantum symmetry groups of $\cst$-algebras equipped with orthogonal filtrations. The classical idea is rather simple: if we want to investigate a symmetry group $G$ of a structure arising as a projective limit of simpler (say finite) structures, it is natural to expect that $G$ will itself be a projective limit of the corresponding symmetry groups. The same phenomenon persists in the quantum world; we show here, in Theorem \ref{mainlimit}, that under suitable conditions on a sequence of \cst-algebras with orthogonal filtrations, the quantum symmetry group of the limit filtration arises as the projective limit of quantum symmetry groups of the \cst-algebras appearing in the sequence. We provide an example of such a projective limit related to the quantum symmetry group of the group of $p$-adic integers (which in fact turns out to be a classical symmetry group). As the assumptions in Theorem \ref{mainlimit} are rather restrictive, it is difficult to expect that they will be always satisfied; indeed, we explain why they fail for the sequence of filtrations related to the duals of symmetric groups, $(\widehat{\sS}_n)_{n=1}^\infty$, studied in \cite{Symmetric}. This however does not mean that the related sequence of quantum symmetric groups cannot lead to a projective limit. We prove that the respective sequences of quantum symmetry groups associated to even  -- $(\widehat{\sS}_{2n})_{n=1}^\infty$ -- and odd
-- $(\widehat{\sS}_{2n+1})_{n=1}^\infty$ -- permutation groups yield (different!) compact quantum groups as projective limits and identify the limits as quantum symmetry groups of certain filtrations induced by specific partitions of $\sS_{\infty}$. Interestingly, both the limit compact quantum groups arising in the context of the group of $p$-adic integers and in the context of the infinite symmetric group turn out to be doublings of the respective duals of discrete groups.

The detailed plan of the paper is as follows: in Section 1 we introduce the notation and recall basic facts related to compact quantum groups, their projective limits, and the construction of quantum symmetry groups. Section 2 describes the doubling procedure for Hopf algebras, focusing on the algebras related to compact quantum groups, and explains its common appearance in the study of quantum symmetry groups of duals of finitely generated groups (Proposition \ref{doubsym}). Section 3 contains the main general result of the paper related to the projective limits of quantum symmetry groups, Theorem \ref{mainlimit}. In Section 4 Theorem \ref{mainlimit} is `shown in action' via the example of quantum symmetry groups of the group of $p$-adic integers. In Section 5 we investigate limits of quantum symmetry groups related to (the duals of) permutation groups; we show that although the relevant orthogonal filtrations do not fit into the framework of Theorem \ref{mainlimit}, they still lead, when restricted to sequences built respectively of even and odd integers, to compact quantum groups arising as projective limits (Subsection \ref{indlim}). We then show that the two limit compact quantum groups are non-isomorphic and identify them as quantum symmetry groups associated to specific partitions of $\sS_{\infty}$ (Theorems \ref{nonisomThm}, \ref{oddSymmetry} and \ref{evSymmetry}).

\section{Preliminaries}
All algebras and \cst-algebras in this paper will be assumed \textbf{unital}, and all $*$-homomorphisms between algebras will be assumed to be \textbf{unit preserving}. The symbol $\ot$ will denote the minimal/spatial tensor product of \cst-algebras and the tensor product of linear maps (or its continuous extension to completed tensor products), whereas $\odot$ will be reserved for algebraic tensor products.

\subsection{Compact quantum groups and quantum symmetry groups}\label{prelCQG}

The following definition is due to S.L.Woronowicz (\cite{wor}).

\begin{deft}
A unital \cst-algebra $\blg$ equipped with a unital $*$-homomorphism $\Com:\blg \to \blg \ot \blg$ is said to be the algebra of functions on a compact quantum group if
$\Com$ is coassociative:
\[ (\id \ot \Com) \circ \Com= (\Com \ot \id) \Com\]
and the quantum cancellation rules hold:
\[ \overline{\Lin}\,\left( \Com(\blg)(1 \ot \blg) \right)= \overline{\Lin}\,\left( \Com(\blg)( \blg \ot 1)\right) = \blg \ot \blg.\]
We then often write $\blg=\C(\QG)$ and call the (formally undefined) underlying object $\QG$ a compact quantum group.
\end{deft}

Each compact quantum group $\QG$ admits a \emph{Haar state}, i.e.\ a state $h \in \C(\QG)^*$ which is bi-invariant:
\[ (h\ot \id)\circ \Com = (\id \ot h) \circ \Com = h(\cdot)1.\]
This leads to a version of the Peter-Weyl theory for compact quantum groups: in particular for each compact quantum group $\QG$ the algebra $\C(\QG)$ admits a dense unital $*$-subalgebra $\Pol(\QG)$, which is a Hopf $*$-algebra with the coproduct inherited from $\C(\QG)$, so that in particular $\Com(\Pol(\QG)) \subset \Pol(\QG) \odot \Pol(\QG)$. The algebra $\Pol(\QG)$ in a sense determines $\QG$ uniquely, although there is a certain ambiguity as to the choice of $\C(\QG)$ -- it can be for example given by the enveloping \cst-algebra of $\Pol(\QG)$ or as the completion of $\Pol(\QG)$ with respect to the GNS representation relative to the Haar state.


\begin{deft}
Let $\QG_1, \QG_2$ be compact quantum groups. By a morphism from $\QG_1$ to $\QG_2$ we understand a $*$-homomorphism $\gamma:\C(\QG_2) \to \C(\QG_1)$ intertwining the respective coproducts:
\[ (\gamma \ot \gamma)\circ \Com_2 = \Com_1 \circ \gamma\]
(note the inversion of arrows).
\end{deft}

Each morphism as above automatically maps $\Pol(\QG_2)$ into $\Pol(\QG_1)$, and the corresponding restriction is a Hopf $*$-algebra morphism. In particular we say that two compact quantum groups $\QG_1$, $\QG_2$ are \emph{isomorphic} if Hopf $*$-algebras $\Pol(\QG_1)$ and $\Pol(\QG_2)$ are isomorphic. Further we say that $\QG_1$ is a \emph{(closed) quantum subgroup} of $\QG_2$ if there exists a morphism from $\QG_1$ to $\QG_2$ such that the corresponding \cst-algebra morphism $\gamma$ is surjective (for a full discussion of the concept of a closed quantum subgroup, including the context of locally compact quantum groups, we refer to \cite{DKSS}).

\begin{deft} \label{actdef}
Let $\QG$ be a compact quantum group and let $\sA$ be a \cst-algebra. An action of $\QG$ on $\alg$ is a $*$-homomorphism $\alpha:\sA \to \sA \ot \C(\QG)$ which satisfies the action equation:
\[ (\alpha \ot \id) \circ \alpha = (\id \ot \Com) \circ \alpha\]
and the Podle\'s condition (sometimes called also the nondegeneracy condition):
\[ \overline{\Lin} \, \alpha(\sA) (1 \ot \C(\QG)) = \sA \ot \C(\QG).\]
\end{deft}

If $\sA=\C(X)$ for a compact space $X$, we sometimes say that $\alpha$ as above defines an action of $\QG$ on $X$. Each action of $\QG$ on a unital \cst-algebra $\sA$ has a purely algebraic incarnation -- there exists $\mathcal{A}$, a dense unital $*$-subalgebra of $\sA$, such that $\alpha|_{\mathcal{A}}: \mathcal{A} \to \mathcal{A} \odot \Pol(\QG)$. The action is said to be \emph{faithful} if the algebra $R_{\alpha}(\QG)$ spanned by the elements of the form $(f \ot \id)\alpha(a)$ ($f$ - linear functional on $\mathcal{A}$, $a \in \mathcal{A}$) is equal to $\Pol(\QG)$.

The collection of actions of compact quantum groups on a given  \cst-algebra $\sA$ forms a category; the objects are pairs $(\QG,\alpha)$, where $\QG$ is a compact quantum group and $\alpha$ is an action of $\QG$ on $\alg$, and the morphism from $(\QG_1,\alpha_1)$ to $(\QG_2,\alpha_2)$ is a morphism from $\QG_1$ to $\QG_2$, represented by a map $\gamma:\C(\QG_2) \to \C(\QG_1)$ which intertwines the respective actions: $\alpha_1= (\id_{\sA} \ot \gamma) \circ \alpha_2$. When the category of actions of compact quantum groups on $\sA$ admits a universal (final) object $(\QG_u,\alpha_u)$, we call $\QG_u$ the \emph{quantum symmetry group} of $\sA$. These concepts originate in the article \cite{WangSym}, where it was shown that the category described above admits a universal object if $\alg$ is finite-dimensional and commutative, and in general, even for finite-dimensional $\sA$, one needs to add extra restrictions to the category of actions, such as the preservation of a fixed faithful state on $\sA$, for the universal object to exist.

This approach led to the development of the theory of quantum symmetry groups of finite graphs (\cite{BanBic}), finite metric spaces (\cite{banmet}), and later to the introduction of the concept of quantum isometry groups of noncommutative manifolds (\cite{gos}).
For more information we refer to the article \cite{orth}, where the following definition is proposed.

\begin{definition}\label{orthfilt}
Let $\sA$ be a  \cst-algebra equipped with a faithful state $\omega$ and with a family $(V_i)_{i\in\Ind}$ of finite-dimensional subspaces of $\sA$ (with the index set $\Ind$ containing a distinguished element 0) satisfying the following conditions:
\begin{enumerate}
\item $V_0=\CC\I_{\sA}$,
\item for all $i,j\in\Ind,\:i\neq{j}$, $a\in{V_i}$ and $b\in{V_j}$ we have $\omega(a^*b)=0$,
\item the set $\Lin\bigl(\bigcup\limits_{i\in\Ind}V_i\bigr)$
is a dense $*$-subalgebra of $\sA$.
\end{enumerate}
If the above conditions are satisfied we say that the pair $(\omega,(V_i)_{i\in\Ind})$ defines an orthogonal filtration of $\sA$; sometimes abusing the notation we will omit $\omega$ and simply say that $(\sA,(V_i)_{i\in\Ind})$ is a \cst-algebra with an orthogonal filtration.

Finally, we say that a quantum group $\QG$ acts on $\sA$ in a filtration preserving way if there exists an action $\alpha$ of $\QG$ on $\sA$ such that the following condition holds:
\[
\alpha(V_i)\subset{V_i}\odot\C(\QG),\qquad{i\in\Ind}.
\]
We write then $(\alpha,\QG)\in\catV$ (and call $\catV$ the corresponding category). 
\end{definition}

We have the following result.

\begin{theorem}[{\cite[Theorem 2.7]{orth}}] \label{qsymV}
Let $(\sA,\omega,(V_i)_{i\in \Ind})$ be a \cst-algebra with an orthogonal filtration. Then there exists a universal compact quantum group $\QG_u$ acting on $\sA$ in a filtration preserving way (i.e.~for any other compact quantum group $\QG$ acting on $\sA$ in a filtration preserving way there exists a unique morphism from $\QG$ to $\QG_u$ intertwining the respective actions). We call $\QG_u$ the quantum symmetry group of $(\sA,\omega,(V_i)_{i\in\Ind})$. The canonical action of $\QG_u$ on $\sA$ is faithful, that is the subalgebra $R_{\alpha}(\QG)$ of $\C(\QG)$ generated by the elements of the form $(f_i \ot \id)\alpha(V_i)$ ($i \in \Ind$, $f_i \in V_i^*$) is equal to $\Pol(\QG)$.
\end{theorem}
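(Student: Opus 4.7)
The plan is to realize $\C(\QG_u)$ as a universal unital C*-algebra whose generators are the putative matrix coefficients of a filtration-preserving action. First I would fix, for each $i \in \Ind$, an $\omega$-orthonormal basis $\{e^{(i)}_k\}_{k=1}^{d_i}$ of $V_i$, with $e^{(0)}_1 = \I$. For any filtration-preserving action $\beta$ of some compact quantum group $\QH$ on $\sA$, finite-dimensionality forces $\beta(e^{(i)}_k) = \sum_l e^{(i)}_l \ot v^{(i)}_{lk}$ for unique $v^{(i)}_{lk} \in \C(\QH)$. Since $V_0 = \CC\I$ is orthogonal to every $V_j$ with $j \neq 0$, the state $\omega$ vanishes on all such $V_j$; applying $(\omega \ot \id)$ to the identity $\beta(e^{(i)}_k)^* \beta(e^{(i)}_l) = \beta((e^{(i)}_k)^* e^{(i)}_l)$ and using orthonormality yields $(v^{(i)})^* v^{(i)} = I$, and $v^{(i)} (v^{(i)})^* = I$ then follows from the standard fact that a finite-dimensional isometric corepresentation of a compact quantum group is automatically unitary.

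Next I would define $\blg := \C(\QG_u)$ as the universal unital C*-algebra generated by symbols $\{u^{(i)}_{lk} : i \in \Ind,\, 1 \le k,l \le d_i\}$ subject to \textup{(U)} unitarity of every matrix $u^{(i)} = (u^{(i)}_{lk})$, and \textup{(R)} the requirement that, for the formal prescription $\alpha_0(e^{(i)}_k) := \sum_l e^{(i)}_l \ot u^{(i)}_{lk}$, every algebraic relation holding in $\mathcal{A} := \Lin\bigl(\bigcup_{i \in \Ind} V_i\bigr)$ (including those encoding the products $e^{(i)}_k e^{(j)}_l$ and the adjoints $(e^{(i)}_k)^*$ in the fixed basis) passes to the images. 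Condition \textup{(U)} bounds the norms of the generators in any $*$-representation, so $\blg$ exists, and \textup{(R)} means precisely that $\alpha_0$ extends to a unital $*$-homomorphism $\mathcal{A} \to \mathcal{A} \odot \blg$. I would then equip $\blg$ with the coproduct $\Com(u^{(i)}_{lk}) := \sum_m u^{(i)}_{lm} \ot u^{(i)}_{mk}$; well-definedness amounts to checking that the images $\Com(u^{(i)})$ still satisfy \textup{(U)} (immediate from the tensor product of unitary matrices with commuting entries) and \textup{(R)} (a consequence of the fact that iterating $\alpha_0$ produces precisely these coefficients). Coassociativity is manifest and the quantum cancellation rules are standard for algebras generated by entries of finite-dimensional unitary corepresentations.

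The canonical action $\alpha_u : \sA \to \sA \ot \blg$ is then the continuous extension of $\alpha_0$; filtration preservation, the action equation and the Podle\'s condition all follow directly from the construction. Universality is then automatic: for any filtration-preserving $(\beta, \QH)$ the matrices $v^{(i)}$ satisfy \textup{(U)} and \textup{(R)}, so the universal property of $\blg$ produces a unique unital $*$-homomorphism $\gamma : \blg \to \C(\QH)$ with $\gamma(u^{(i)}_{lk}) = v^{(i)}_{lk}$, and one reads off from the formulas that $\gamma$ intertwines both the coproducts and the actions. Faithfulness in the stated sense is immediate because $\Pol(\QG_u)$ is by construction generated by the $u^{(i)}_{lk}$, each of which equals $(f^{(i)}_l \ot \id)\alpha_u(e^{(i)}_k)$ for the dual basis $\{f^{(i)}_l\} \subset V_i^*$. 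I expect the main technical obstacle to be the compatibility check for $\Com$ with the relations \textup{(R)}: conceptually this is just the iterated coaction axiom $(\alpha_0 \ot \id) \circ \alpha_0 = (\id \ot \Com) \circ \alpha_0$, but translating it through the structure constants of $\mathcal{A}$ requires careful bookkeeping.
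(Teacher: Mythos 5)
Note first that the paper you are working from does not prove this statement at all -- it is imported verbatim from \cite[Theorem 2.7]{orth} -- so your sketch has to be measured against the actual construction in \cite{orth}, whose general strategy (a universal \cst-algebra on the matrix coefficients of a filtration-preserving coaction) you do reproduce. The serious gap is the step where you declare that ``the quantum cancellation rules are standard for algebras generated by entries of finite-dimensional unitary corepresentations.'' That general principle is false. A unital \cst-algebra generated by the entries of unitary matrices $u^{(i)}$, carrying a $*$-homomorphism $\Com$ with $\Com(u^{(i)}_{kl})=\sum_m u^{(i)}_{km}\ot u^{(i)}_{ml}$, need not satisfy Woronowicz's density conditions: one also needs the conjugate matrices $\overline{u^{(i)}}=\bigl((u^{(i)}_{kl})^*\bigr)$ to be invertible, which is precisely why the universal quantum groups $A_u(Q)$ are defined with an extra relation on $\overline{u}$ and why no single compact quantum group is universal for all $d$-dimensional unitary corepresentations. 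For instance, the universal \cst-algebra generated by the entries of a single $2\times2$ unitary carries your coproduct but is not a compact quantum group: it surjects onto the Toeplitz algebra via $u_{11}=s$, $u_{12}=1-ss^*$, $u_{21}=0$, $u_{22}=s^*$ ($s$ the unilateral shift), and there $\overline{u}$ is not invertible, whereas coefficients of a unitary corepresentation of a genuine compact quantum group lie in the canonical Hopf $*$-algebra, where the antipode inverts the conjugate matrix. So in your construction the cancellation laws must be extracted from the relations \textup{(R)}, i.e.\ from the compatibility of $\alpha_0$ with the involution of $\mathcal{A}$; this exhibits $\overline{u^{(i)}}$ only as a corner of a conjugated block unitary on the finite sum of $V_j$'s containing $V_i^*$, which (when the $V_i$ are not $*$-invariant and $\omega$ is not a trace) gives one-sided invertibility at best. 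Producing the antipode or the Haar state at this point is the technical heart of \cite[Theorem 2.7]{orth}, and it is absent from your argument. Relatedly, your use of ``an isometric finite-dimensional corepresentation is automatically unitary'' is correct for an honest compact quantum group $\QH$, but it already requires knowing that the coefficients $v^{(i)}_{lk}$ lie in $\Pol(\QH)$, which one gets from the $\omega$-invariance of the action and the spectral decomposition of actions (cf.\ \cite{podles}, \cite{dz}), not from finite-dimensionality alone.

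A second genuine gap is the phrase ``the canonical action $\alpha_u$ is then the continuous extension of $\alpha_0$.'' A unital $*$-homomorphism defined on a dense $*$-subalgebra of a \cst-algebra need not be bounded, so there is no automatic extension from $\mathcal{A}=\Lin\bigl(\bigcup_i V_i\bigr)$ to $\sA$; one has to prove $\|\alpha_0(a)\|\leq\|a\|_{\sA}$, which is done by using that $\alpha_0$ preserves the faithful state $\omega$ and implementing the coaction by an isometry on the GNS space of $\omega\ot h$ (equivalently, working with the reduced completion of $\blg$), and the Podle\'s condition for the extension likewise needs an argument rather than being ``direct from the construction.'' The remaining points (existence of $\blg$, well-definedness of $\Com$ via the iterated coaction, uniqueness of $\gamma$, and faithfulness once one knows the $*$-algebra generated by the $u^{(i)}_{kl}$ is the canonical dense Hopf $*$-algebra, cf.\ \cite[Theorem 5.1]{bmt}) are fine as sketched.
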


Note that the above theorem implies that each compact quantum group acting faithfully on $\sA$ in a filtration preserving way is a quantum subgroup of $\QG_u$.  Recall also that, as mentioned in the introduction, it has recently been extended to the case of quantum symmetry groups of \cst-modules equipped with orthogonal filtrations in \cite{Manon}.

The most important example for us is the one studied in Section 4 of \cite{orth}, which we now describe. Let $\Gamma$ be a discrete group. It embeds naturally into the reduced group \cst-algebra of $\Gamma$, and the corresponding elements of this $C^*$-algebra  will still be denoted by the same symbols; in particular we identify the group ring $\bc[\Gamma]$ as a subalgebra of $\C^*_r(\Gamma)$ via $\bc[\Gamma]=\textup{span}\{\gamma:\gamma \in \Gamma\}.$ The canonical trace on $\C^*_r(\Gamma)$ is given by the continuous extension of the formula:
\[\tau(\gamma) = \left\{ \begin{array}{ccc} 1 & \textup{ if } & \gamma=e \\ 0 & \textup{ if } & \gamma\neq e \\ \end{array} \right.\]
We consider below partitions of $\Gamma$ into finite sets, always assuming that $\{e\}$ (where $e$ denotes the neutral element of $\Gamma$)  is one of the sets in the partition. The following lemma is straightforward.

\begin{lemma}[Lemma 4.1, \cite{orth}] \label{filtGamma}
If $\mathcal{F}=(F_i)_{i\in \Ind}$ is a partition of $\Gamma$ into finite sets and $V^{\mathcal{F}}_i:=\textup{span}\{\gamma:\gamma\in F_i\}\subset \C^*_r(\Gamma)$ ($i\in \Ind$), then
the pair $( \tau, (V^{\mathcal{F}}_i)_{i\in \Ind})$ defines an orthogonal filtration of $\C^*_r(\Gamma)$.
\end{lemma}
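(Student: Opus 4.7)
The plan is to verify the three defining conditions of an orthogonal filtration (Definition \ref{orthfilt}) directly, taking $\omega = \tau$ and $(V^{\mathcal{F}}_i)_{i \in \Ind}$ as specified.

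First, I would fix notation: since $\{e\}$ is (by the standing assumption on the partitions considered) one of the sets in $\mathcal{F}$, relabel it so that $F_0 = \{e\}$. Then $V^{\mathcal{F}}_0 = \textup{span}\{e\} = \CC \I_{\C^*_r(\Gamma)}$, which gives condition (1) immediately.

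Next, to establish condition (2), I would reduce to the case of basis elements by bilinearity of $(a,b) \mapsto \tau(a^*b)$. For $\gamma \in F_i$ and $\delta \in F_j$ with $i \neq j$, the elements $\gamma, \delta \in \Gamma$ live in disjoint blocks, so $\gamma \neq \delta$, which gives $\gamma^{-1}\delta \neq e$. The involution on $\C^*_r(\Gamma)$ satisfies $\gamma^* = \gamma^{-1}$, hence $\tau(\gamma^* \delta) = \tau(\gamma^{-1}\delta) = 0$ by the definition of $\tau$. Extending by linearity to arbitrary $a \in V^{\mathcal{F}}_i$ and $b \in V^{\mathcal{F}}_j$ yields $\tau(a^*b) = 0$.

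Finally, for condition (3), observe that $\textup{Lin}\bigl(\bigcup_{i\in\Ind} V^{\mathcal{F}}_i\bigr) = \textup{span}\{\gamma : \gamma \in \Gamma\} = \bc[\Gamma]$, since $\mathcal{F}$ partitions all of $\Gamma$. This is a unital $*$-subalgebra of $\C^*_r(\Gamma)$ (closed under products because $\Gamma$ is a group, and under the involution because $\gamma^* = \gamma^{-1}$), and it is dense in $\C^*_r(\Gamma)$ by the very construction of the reduced group \cst-algebra as the completion of $\bc[\Gamma]$ in the left regular representation.

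No step here presents any real obstacle; the whole proof is a direct unpacking of definitions, which is presumably why the authors call the lemma straightforward. If anything requires a moment's care, it is just making sure the orthogonality argument is phrased in terms of basis elements and the trace formula, rather than attempting to verify it abstractly.
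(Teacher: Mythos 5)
Your verification is correct and is exactly the routine argument the paper has in mind (the paper gives no proof, calling the lemma straightforward and citing \cite{orth}). The only point you do not address is that Definition \ref{orthfilt} requires the state to be \emph{faithful}: you should at least remark that the canonical trace $\tau$ is faithful on $\C^*_r(\Gamma)$, which is standard since the GNS representation of $\tau$ is the left regular representation and $\C^*_r(\Gamma)$ is by definition faithfully represented there; with that one sentence added, the proof is complete.
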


\begin{deft} \label{qsymF}
The quantum symmetry group of $(\C^*_r(\Gamma), \tau, (V^{\mathcal{F}}_i)_{i\in \Ind})$, defined according to Theorem \ref{qsymV}, will be called the quantum symmetry group of $\widehat{\Gamma}$ preserving the partition $\mathcal{F}$.
\end{deft}

The language used above is supposed to reflect the fact that we think of $\C^*_r(\Gamma)$ as the (reduced) algebra of continuous functions on the compact quantum group $\widehat{\Gamma}$, the dual of $\Gamma$.


If $\Gamma$ is a finitely generated group, with a fixed finite symmetric generating set $S$ (not containing the neutral element), then the word-length related to $S$ naturally induces a partition $\mathcal{F}_S$ of $\Gamma$. In this case the quantum symmetry group of $\widehat{\Gamma}$ preserving the partition $\mathcal{F}_S$ can be identified with the quantum isometry group of the spectral triple $(\bc[\Gamma], \ell^2(\Gamma),D_l)$, where $D_l$ is the Dirac operator related to the length function $l:\Gamma \to \NN_0$ (for the respective definitions and details we refer to the articles \cite{Deb2}, \cite{bhs}, \cite{Symmetric} and \cite{orth}). We will thus call the  quantum symmetry group of $\widehat{\Gamma}$ preserving the partition $\mathcal{F}_S$ the \emph{quantum isometry group of} $\widehat{\Gamma}$ and denote it
$\QI(\widehat{\Gamma}, S)$, or simply $\QI(\widehat{\Gamma})$, when the choice of the generating set is clear from the context.

\subsection{Inductive limits of \cst-algebras, projective limits of compact quantum groups}

For a short, but exhaustive description of the inductive limit construction for \cst-algebras we refer to Section II.8.2 of \cite{bl} or to Chapter 6 of \cite{Niels}. We will use it in the following context: let $(\alg_n)_{n\in \NN}$ be a sequence of  \cst-algebras and assume that for all $n,m\in \NN, n<m$, there exists a $*$-homomorphism $\pi_{n,m}:\alg_n \to \alg_m$ and that the compatibility conditions hold:
\[
\pi_{m,k}\comp\pi_{n,m}=\pi_{n,k},\qquad{n<m<k}
\]
(note that we do not assume that the connecting maps are injective).
Then there exists a unique (up to isomorphism)  \cst-algebra $\alg_{\infty}$ and  $*$-homomorphisms $\pi_{n,\infty}:\alg_n \to \alg_{\infty}$ such that
\[
\pi_{m,\infty}\comp\pi_{n,m}=\pi_{m,\infty},\qquad{n<m},
\]
and $\alg_{\infty}$ is generated by the union $\bigcup_{n\in \NN} \pi_{n,\infty}(\alg_n)$. The algebra $\alg_{\infty}$ satisfies the natural universal property: whenever $\alg$ is a \cst-algebra equipped with the $*$-homomorphisms $\gamma_{n}:\alg_n \to \alg$ `compatible' with the family $\{\pi_{n,m}:n,m\in \NN, n<m\}$, there exists a unique  $*$-homomorphism $\pi:\alg_{\infty}\to \alg$ such that $\pi\circ \pi_{n,\infty} = \gamma_n$ for all $n \in \NN$.

The following result was shown in \cite{Wangfree} (see also \cite{bgs}).

\begin{lemma} \label{indqg}
Suppose that $(\QG_n)_{n=1}^{\infty}$ is a sequence of compact quantum groups, that for each $n,m \in\NN,\;n<m$ there exists a compact quantum group morphism $\pi_{n,m}$ from $\QG_m$ to $\QG_n$ and the compatibility conditions
\[
\pi_{m,k}\comp\pi_{k,n}=\pi_{m,n},\qquad{n<k<m},
\]
hold. Then the inductive limit of the sequence $\bigl(\C(\QG_n)\bigr)_{n=1}^{\infty}$ of \cst-algebras admits a canonical structure of the algebra of continuous functions on a compact quantum group. Denote the resulting compact quantum group by $\QG_{\infty}$ and let for each $n\in\NN$ the associated morphism from $\QG_{\infty}$ to $\QG_n$ be denoted by $\pi_{n,\infty}$. Then $\QG_{\infty}$ has the following universal property: for any compact quantum group $\QH$ such that there exists a family of (compatible in a natural sense) morphisms $\gamma_n$ from $\QH$ to $\QG_n$ there exists a unique morphism $\gamma$ from $\QH$ to $\QG_{\infty}$ such that $ \gamma\comp\pi_{n,\infty}=\gamma_n$. We will sometimes write
\[
\QG_{\infty}=\varprojlim\QG_n.
\]
\end{lemma}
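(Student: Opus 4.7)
The plan is to invoke the universal property of the \cst-algebraic inductive limit $\alg_\infty$ of the sequence $\bigl(\C(\QG_n)\bigr)_{n=1}^\infty$ (under the connecting maps $\pi_{n,m}$, with canonical maps $\pi_{n,\infty}:\C(\QG_n)\to\alg_\infty$) twice: once to construct the coproduct on $\alg_\infty$, and once to establish the universal property of the resulting compact quantum group. For each $n$ I would form the $*$-homomorphism
\[
\phi_n:=(\pi_{n,\infty}\ot\pi_{n,\infty})\comp\Com_n:\C(\QG_n)\longrightarrow\alg_\infty\ot\alg_\infty.
\]
Using the fact that each $\pi_{n,m}$ intertwines $\Com_n$ with $\Com_m$ together with $\pi_{m,\infty}\comp\pi_{n,m}=\pi_{n,\infty}$, one finds $\phi_m\comp\pi_{n,m}=\phi_n$ for $n<m$. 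The universal property of $\alg_\infty$ would then produce a unique unital $*$-homomorphism $\Com_\infty:\alg_\infty\to\alg_\infty\ot\alg_\infty$ such that $\Com_\infty\comp\pi_{n,\infty}=\phi_n$ for every $n$; I would set $\C(\QG_\infty):=\alg_\infty$ with coproduct $\Com_\infty$.

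To verify coassociativity of $\Com_\infty$, I would precompose both sides of the desired identity with $\pi_{n,\infty}$: both $(\Com_\infty\ot\id)\comp\Com_\infty\comp\pi_{n,\infty}$ and $(\id\ot\Com_\infty)\comp\Com_\infty\comp\pi_{n,\infty}$ unfold via the defining property of $\Com_\infty$ and the coassociativity of $\Com_n$ to the common expression $(\pi_{n,\infty}\ot\pi_{n,\infty}\ot\pi_{n,\infty})\comp(\Com_n\ot\id)\comp\Com_n$, and density of $\bigcup_n\pi_{n,\infty}(\C(\QG_n))$ in $\alg_\infty$ concludes. The main technical obstacle will be the verification of the quantum cancellation rules. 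The key observation is that
\[
\Com_\infty\bigl(\pi_{n,\infty}(\C(\QG_n))\bigr)\bigl(1\ot\pi_{n,\infty}(\C(\QG_n))\bigr)=(\pi_{n,\infty}\ot\pi_{n,\infty})\bigl(\Com_n(\C(\QG_n))(1\ot\C(\QG_n))\bigr),
\]
so continuity of $\pi_{n,\infty}\ot\pi_{n,\infty}$ combined with the cancellation rule for $\Com_n$ places $\pi_{n,\infty}(\C(\QG_n))\ot\pi_{n,\infty}(\C(\QG_n))$ inside the closed linear span of the left-hand side; the union over $n$ is then dense in $\alg_\infty\ot\alg_\infty$. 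The second cancellation rule is symmetric. Here I would need to be careful moving between algebraic images and closed linear spans inside the completed tensor product.

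Finally, for the universal property of $\QG_\infty$, given a compact quantum group $\QH$ with compatible morphisms $\gamma_n:\C(\QG_n)\to\C(\QH)$ satisfying $\gamma_m\comp\pi_{n,m}=\gamma_n$, the universal property of $\alg_\infty$ furnishes a unique $*$-homomorphism $\gamma:\alg_\infty\to\C(\QH)$ with $\gamma\comp\pi_{n,\infty}=\gamma_n$. To see that $\gamma$ intertwines the coproducts, I would precompose with each $\pi_{n,\infty}$, at which point the identity $(\gamma\ot\gamma)\comp\Com_\infty=\Com_\QH\comp\gamma$ reduces to $(\gamma_n\ot\gamma_n)\comp\Com_n=\Com_\QH\comp\gamma_n$, i.e.\ to the hypothesis that each $\gamma_n$ is a morphism of compact quantum groups; density on $\alg_\infty$ upgrades this to the full intertwining property. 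Uniqueness of $\gamma$ is inherited from that of the inductive limit.
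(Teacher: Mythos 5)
Your proposal is correct and follows exactly the standard route: the paper itself gives no proof of Lemma \ref{indqg}, citing \cite{Wangfree} (see also \cite{bgs}), and the argument there is the same one you give --- define $\Com_\infty$ on the \cst-inductive limit via its universal property applied to $(\pi_{n,\infty}\ot\pi_{n,\infty})\comp\Com_n$, then verify coassociativity, the cancellation rules and the universal property by density of $\bigcup_n\pi_{n,\infty}\bigl(\C(\QG_n)\bigr)$. The only points worth making explicit are routine: the images $\pi_{n,\infty}\bigl(\C(\QG_n)\bigr)$ are nested (since $\pi_{n,\infty}=\pi_{m,\infty}\comp\pi_{n,m}$), so the union of the corresponding algebraic tensor products is directed and dense in $\alg_\infty\ot\alg_\infty$, which is what your cancellation and intertwining arguments implicitly use.
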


This construction should be thought of as a quantum version of the inverse limit construction for compact groups (see Section \ref{p-adic}).

\section{Doubling of Hopf $*$-algebras and its connections to quantum symmetry groups}\label{doubl}

In this section we present the doubling construction for Hopf $*$-algebras and explain its connection to quantum symmetry groups of duals of discrete groups.

\subsection{Doubling of Hopf $*$-algebras} \label{doublss}

The simple procedure of \emph{doubling} a Hopf algebra or, more generally, a regular multiplier Hopf algebra with invariant functionals, was described in \cite[Section 3]{Symmetric}. It needs to be emphasized that it is a different (in fact, far simpler) procedure than that of constructing the \emph{quantum double} of a Hopf algebra from \cite{Drinfeld}. The doubling construction takes as input data a Hopf algebra (or a regular multiplier Hopf algebra with invariant functionals) $(\cA,\Delta_\cA)$ equipped with a (multiplier) Hopf algebra automorphism $\theta\colon\cA\to\cA$ such that $\theta^2=\id$. The procedure itself is quite simple: the doubling $(\widetilde{\cA},\Delta_{\widetilde{\cA}})$ of $(\cA,\Delta_\cA)$ is the dual of the regular multiplier Hopf algebra with invariant functionals obtained as the crossed product of $(\hh{\cA},\Delta_{\hh{\cA}})$ by the dual $\hh{\theta}$ of the automorphism $\theta$ (see \cite[Subsection 3.1]{Symmetric}).

In this paper we will describe many examples of quantum symmetry (isometry and other) groups which happen to be doublings of usually quite simple Hopf algebras (or multiplier Hopf algebras). We would like to list here several simple facts about such doublings. For simplicity we will restrict attention to unital Hopf $*$-algebras (in particular we will be interested in so-called \emph{CQG algebras}, i.e.\ those Hopf $*$-algebras which arise as as $\Pol(\QG)$ for a compact quantum group $\QG$, see \cite{DiK}). However, the whole theory can be developed (with minor technical complications) for regular multiplier Hopf algebras with invariant functionals, as defined in \cite{Algebraic}.

Instead of following the steps of the construction of $(\widetilde{\cA},\Delta_{\widetilde{\cA}})$ as indicated above we will now give a direct description of this object leaving the proof that $(\widetilde{\cA},\Delta_{\widetilde{\cA}})$ is indeed the doubling of $(\cA,\Delta_\cA)$ to the reader. Let $(\cA,\Delta_\cA)$ be a Hopf $*$-algebra over $\CC$ and let $\theta$ be an order-two automorphism of $(\cA,\Delta_\cA)$. Then $\widetilde{\cA}$ is as a $*$-algebra isomorphic to $\cA\oplus\cA$. Let us denote the injections of $\cA$ onto the first and second coordinate in $\widetilde{\cA}$ by $\xi$ and $\eta$ respectively:
\[
\xi(a)=(a,0),\quad\eta(a)=(0,a),\qquad(a\in\cA).
\]
The comultiplication $\Delta_{\widetilde{\cA}}\colon\widetilde{\cA}\to\widetilde{\cA}\odot\widetilde{\cA}$ is determined by the properties:
\begin{equation}\label{DeltaATil}
\begin{split}
\Delta_{\widetilde{\cA}}\comp\xi&=\bigl(\xi\tens\xi+\eta\tens[\eta\comp\theta]\bigr)\comp\Delta_\cA,\\
\Delta_{\widetilde{\cA}}\comp\eta&=\bigl(\xi\tens\eta+\eta\tens[\xi\comp\theta]\bigr)\comp\Delta_\cA.
\end{split}
\end{equation}

Equipped with formulas \eqref{DeltaATil} one can easily show that if $h$ is a left Haar state (left invariant state) on $\cA$ then the functional
\begin{equation}\label{doublH}
\widetilde{\cA}\ni(a,b)\longmapsto\tfrac{1}{2}\bigl(h(a)+h(b)\bigr)
\end{equation}
is left invariant.

Once $(\widetilde{\cA},\Delta_{\widetilde{\cA}})$ is constructed one can observe that we have a coaction of this Hopf algebra on $\cA$ given by
\begin{equation}\label{coact}
\alpha=\bigl(\id\tens\xi+\theta\tens[\eta\comp\theta]\bigr)\comp\Delta_\cA\colon\cA\to\cA\odot\widetilde{\cA}.
\end{equation}
Moreover this coaction is \emph{embeddable} as defined in \cite[Definition 1.8]{podles}. Indeed, we have an injective $*$-homomorphism $\Xi\colon\cA\to\widetilde\cA$ defined as
\begin{equation}\label{emb}
\Xi(a)=\xi(a)+\eta\bigl(\theta(a)\bigr),\qquad(a\in\cA),
\end{equation}
which satisfies
\[
\Delta_{\widetilde{\cA}}\comp\Xi=(\Xi\tens\id)\comp\alpha.
\]

Consider two Hopf $*$-algebras $(\cA_1,\Delta_{\cA_1})$ and $(\cA_2,\Delta_{\cA_2})$ with order two automorphisms $\theta_1$ and $\theta_2$ and their respective doublings $(\widetilde{\cA}_1,\Delta_{\widetilde{\cA}_1})$ and $(\widetilde{\cA}_2,\Delta_{\widetilde{\cA}_2})$. For $i=1,2$ let $\xi_i,\eta_i\colon\cA_i\to\widetilde{\cA}_i$ be the injections defined above. They provide a convenient way to extend linear maps $\cA_1\to\cA_2$ to maps $\widetilde{\cA}_1\to\widetilde{\cA}_2$: given a linear map $f\colon\cA_1\to\cA_2$ we will denote by $\widetilde{f}$ the unique map $\widetilde{\cA}_1\to\widetilde{\cA}_2$ satisfying
\[
\widetilde{f}\comp\xi_1=\xi_2\comp{f},\qquad\widetilde{f}\comp\eta_1=\eta_2\comp{f}.
\]
In particular, taking $\cA_2=\cA_1$ and for $f$ the automorphism $\theta_1$ we obtain an automorphism $\widetilde{\theta}_1$ of $\widetilde{\cA}_1$. Similarly we obtain an automorphism $\widetilde{\theta}_2$ of $\widetilde{\cA}_2$.

The following properties of the extension $f\mapsto\widetilde{f}$ are obvious:
\begin{itemize}
\item an equivariant map (i.e.~$f$ such that $\theta_2\comp{f}=f\comp\theta_1$) has an equivariant extension: $\widetilde{\theta}_2\comp\widetilde{f}=\widetilde{f}\comp\widetilde{\theta}_1$,
\item if $f$ is injective/surjective then so is $\widetilde{f}$,
\item if $f$ is an algebra homomorphism then so is $\widetilde{f}$,
\item if $f$ is an equivariant Hopf algebra map then $\widetilde{f}$ is a Hopf algebra map,
\item if $(\cA_3,\Delta_{\cA_3})$ is another Hopf algebra with an order two automorphism $\theta_3$ and with doubling $(\widetilde{\cA}_3,\Delta_{\widetilde{\cA}_3})$ and if $g\colon\cA_2\to\cA_3$ is a linear map then $\widetilde{g\comp{f}}=\widetilde{g}\comp\widetilde{f}$.
\end{itemize}

For yet another property of the extension $f\mapsto\widetilde{f}$ let
\[
\begin{array}{r@{\:\colon\:}l}
\alpha_i&\cA_i\to\cA_i\odot\widetilde{\cA}_i,\\
\Xi_i&\cA_i\to\widetilde{\cA}_i
\end{array}
\qquad{i=1,2},
\]
be the coactions and embeddings as in \eqref{coact} and \eqref{emb}. Now if $f\colon\cA_1\to\cA_2$ is equivariant then
\[
\Xi_2\comp{f}=\widetilde{f}\comp\Xi_1,
\]
so if $f$ is also a Hopf algebra map then
\begin{equation}\label{Xialf}
\begin{split}
(\Xi_2\tens\id)\comp\alpha_2\comp{f}
&=\Delta_{\widetilde{\cA}_2}\comp\Xi_m\comp{f}=\Delta_{\widetilde{\cA}_2}\comp\widetilde{f}\comp\Xi_1\\
&=\bigl(\widetilde{f}\tens\widetilde{f}\,\bigr)\comp\Delta_{\widetilde{\cA}_1}\comp\Xi_1\\
&=(\Xi_2\tens\id)\comp\bigl(f\tens\widetilde{f}\,\bigr)\comp\alpha_1.
\end{split}
\end{equation}
Now let $\pi_2$ be the map
\[
\cA_2\ni(a,b)\longmapsto{a}\in\cA_2.
\]
Then $\pi_2\comp\Xi_2=\id_{\cA_2}$. Thus composing both sides of \eqref{Xialf} from the left with $(\pi_2\tens\id)$ we obtain
\[
\alpha_2\comp{f}=\bigl(f\tens\widetilde{f}\,\bigr)\comp\alpha_1.
\]

We will use these properties in Section \ref{symm}.

The last result in this section is related to the question when two given doublings of a given Hopf $*$-algebra $\cA$, say given respectively by order two-automorphisms $\theta_1$ and $\theta_2$ of $\cA$, are isomorphic as Hopf $*$-algebras. It is easy to verify (for example using the formulas above) that if $\theta_1$ and $\theta_2$ are \emph{conjugate}, i.e.\ there exists an automorphism $\theta$ of $\cA$ such that $\theta \circ \theta_1 = \theta_2 \circ \theta$, then the doublings
$\widetilde{\cA}_1$ and $\widetilde{\cA}_2$, associated respectively to $\theta_1$ and $\theta_2$, are isomorphic. It turns out that in some cases, which will turn out to be important for our considerations later in the paper, this is the only way in which an isomorphism of two different doublings of a Hopf $*$-algebra $\cA$ can arise. This is the content of the next proposition.

\begin{proposition}\label{isomDoubl}
Suppose that $\Gamma$ is a non-trivial ICC  group (a group with infinite conjugacy classes) and let $\cA=\bc[\Gamma]$. Suppose that $\theta_1,\theta_2$ are (Hopf $*$-algebra) automorphisms of $\cA$ and that the respective doublings induced by $\theta_1$ and $\theta_2$,  $\widetilde{\cA}_1$ and $\widetilde{\cA}_2$, are isomorphic. Then the automorphisms $\theta_1$ and $\theta_2$ are conjugate.
\end{proposition}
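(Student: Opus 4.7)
The plan is to decompose a hypothetical Hopf $*$-algebra isomorphism $\Phi\colon\widetilde{\cA}_1\to\widetilde{\cA}_2$ along the direct sum structure and then extract the conjugating automorphism directly from the coproduct intertwining property. The ICC hypothesis enters only at the start, to control the central idempotents of $\widetilde{\cA}_i\cong\cA\oplus\cA$: since $\Gamma$ is ICC, $Z(\bc[\Gamma])=\bc\I$, so the central primitive idempotents of $\widetilde{\cA}_i$ are exactly $\xi_i(\I)$ and $\eta_i(\I)$. Any $*$-algebra isomorphism preserves central primitive idempotents, so $\Phi$ either preserves or swaps this pair, and I would split the argument into these two cases.

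In the preserving case $\Phi\comp\xi_1=\xi_2\comp\phi$ and $\Phi\comp\eta_1=\eta_2\comp\psi$ for some $*$-algebra automorphisms $\phi,\psi$ of $\cA$. Applying $\Delta_{\widetilde{\cA}_2}\comp\Phi=(\Phi\tens\Phi)\comp\Delta_{\widetilde{\cA}_1}$ to $\xi_1(\gamma)$ for $\gamma\in\Gamma$, using \eqref{DeltaATil}, and comparing the four summands of $\widetilde{\cA}_2\odot\widetilde{\cA}_2\cong\cA\odot\cA$ indexed by the pairs $(\xi,\xi),(\xi,\eta),(\eta,\xi),(\eta,\eta)$, the $\xi\tens\xi$ summand gives $\Delta_\cA(\phi(\gamma))=\phi(\gamma)\tens\phi(\gamma)$, so $\phi(\gamma)$ is grouplike in $\bc[\Gamma]$ and hence lies in $\Gamma$; the $\eta\tens\eta$ summand then pins down $\psi(\gamma)=\phi(\gamma)$ and $\theta_2(\phi(\gamma))=\phi(\theta_1(\gamma))$. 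Since $\phi$ restricts to a bijection $\Gamma\to\Gamma$ (the inverse map comes from the same analysis applied to $\Phi^{-1}$), it is a group automorphism, so it extends to a Hopf $*$-algebra automorphism of $\cA$, and by linearity $\theta_2\comp\phi=\phi\comp\theta_1$, which is the desired conjugacy.

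In the swapping case I would put $\Phi\comp\xi_1=\eta_2\comp\phi$ for a unital $*$-algebra map $\phi$ with $\phi(\I)=\I$, then run the same intertwining relation. Here $\Delta_{\widetilde{\cA}_2}\bigl(\eta_2(\phi(\gamma))\bigr)$ sits in the $\xi\tens\eta$ and $\eta\tens\xi$ summands, while $(\Phi\tens\Phi)\Delta_{\widetilde{\cA}_1}(\xi_1(\gamma))$ sits in the $\xi\tens\xi$ and $\eta\tens\eta$ summands. Linear independence of the four summands forces each to vanish separately, and the $\xi\tens\eta$ component reduces to $\sum_{g}c_g(g\tens g)=0$ where $\phi(\gamma)=\sum_g c_g g$, giving $\phi(\gamma)=0$ for every $\gamma\in\Gamma$, contradicting $\phi(\I)=\I$. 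Hence this case is impossible. The main obstacle I anticipate is keeping the four-summand bookkeeping clean in the preserving case; the conceptual lever is entirely the ICC condition, without which $\Phi$ could permute central idempotents coming from other finite conjugacy classes of $\Gamma$, and the initial reduction to two cases would collapse.
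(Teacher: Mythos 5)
Your overall strategy is the same as the paper's: use the ICC hypothesis to see that the only non-trivial central projections of $\widetilde{\cA}_i\cong\cA\oplus\cA$ are $\xi_i(\I)$ and $\eta_i(\I)$, rule out the case where the isomorphism swaps them, and in the preserving case read off the conjugacy from the intertwining of the coproducts. Your swap-case argument is valid (the paper reaches the same contradiction even more directly by applying the intertwining relation to the central idempotent $p_1$ itself, using $\Com_1(p_1)=p_1\ot p_1+p_2\ot p_2$ and $\Com_2(p_2)=p_1\ot p_2+p_2\ot p_1$).

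In the preserving case, however, there is a genuine gap: the $\eta\tens\eta$ comparison does not ``pin down $\psi(\gamma)=\phi(\gamma)$''. What it gives is an equality of elementary tensors $\psi(\gamma)\ot\psi(\theta_1(\gamma))=\phi(\gamma)\ot\theta_2(\phi(\gamma))$, and since only the tensor is determined, this yields $\psi(\gamma)=c_\gamma\,\phi(\gamma)$ and $\psi(\theta_1(\gamma))=c_\gamma^{-1}\,\theta_2(\phi(\gamma))$ for some nonzero scalar $c_\gamma$. The ambiguity is real: if $\lambda$ is a unitary character of $\Gamma$ with $\lambda\comp\theta_1=\lambda^{-1}$ (for instance the sign character of $\sS_\infty$, an ICC group, with $\theta_1$ as in Section \ref{symm}), then $\xi(a)\mapsto\xi(a)$, $\eta(\gamma)\mapsto\lambda(\gamma)\eta(\gamma)$ defines a Hopf $*$-algebra automorphism of the doubling with $\phi=\id$ but $\psi\neq\id$, so the intermediate claim $\psi=\phi$ is false in general. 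Fortunately the conclusion you need survives with a short repair: multiplicativity of $\psi$ shows $\gamma\mapsto c_\gamma$ is a character, and applying $\psi(\gamma')=c_{\gamma'}\phi(\gamma')$ at $\gamma'=\theta_1(\gamma)$ and comparing with $\psi(\theta_1(\gamma))=c_\gamma^{-1}\theta_2(\phi(\gamma))$, the linear independence of group elements in $\bc[\Gamma]$ forces $\phi(\theta_1(\gamma))=\theta_2(\phi(\gamma))$ (and $c_{\theta_1(\gamma)}c_\gamma=1$), which is the desired conjugacy implemented by $\phi$. This character twist is exactly the subtlety the paper's proof handles via the map $T=(\id\ot(\epsilon\comp\rho))\comp\Com$ and the character $\lambda=\epsilon\comp\rho$ appearing in \eqref{foureq1}--\eqref{foureq2}; with that one correction your argument is complete and essentially coincides with the paper's.
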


\begin{proof}
Recall first that $\widetilde{\cA}_1$ and $\widetilde{\cA}_2$ are  isomorphic as $*$-algebras to the direct sum $\cA \oplus \cA$. Let $\theta_1$ and $\theta_2$ be as above and denote the respective coproducts  of $\widetilde{\cA}_1$ and $\widetilde{\cA}_2$ by $\Com_1$ and $\Com_2$. Let $\phi$ be an automorphism of the $*$-algebra $\cA \oplus \cA$  such that
\begin{equation} (\phi \ot \phi)\circ \Com_1 = \Com_2 \circ \phi.\label{phiint}\end{equation}
As $\Gamma$ is assumed to be ICC, it is well-known and easy to check that the centre of the group-ring $\bc[\Gamma]$ consists only of the scalar multiples of identity. This implies that the only non-trivial (i.e.\ non-zero and not equal to $1$) central projections in the algebra $\cA\oplus \cA$ are $p_1 =\xi(1_{\cA}), p_2 =\eta(1_{\cA})$ (note we are using here the notation introduced earlier for two algebra embeddings of $\cA$ into $\cA \oplus \cA$). Thus we must have either $\phi(p_1)=p_2$ or $\phi(p_1)=p_1$. Suppose for the moment the first case holds. Then $\phi(p_2)=p_1$ and as the equation \eqref{DeltaATil} implies that
$\Com_1 (p_1)  = p_1 \ot p_1 + p_2 \ot p_2$ and $\Com_2(p_2) = p_1 \ot p_2 + p_2 \ot p_1$, applying the  formula \eqref{phiint} to $p_1$ yields
\[ p_2 \ot p_2 + p_1 \ot p_1 = p_2 \ot p_1 + p_1 \ot p_2,\]
which is contradictory. Thus we must have $\phi(p_1)=p_1$, $\phi(p_2)=p_2$. This is easily seen to imply that there exist two automorphisms of the algebra $\cA$, which we will denote respectively by $\psi$ and $\rho$, such that
\[ \phi \circ \xi = \xi \circ \psi, \;\;\; \phi \circ \eta = \eta \circ \rho.\]
Substituting the last formulas into \eqref{phiint} and using \eqref{DeltaATil} again yields the following four equalities, where now $\Com$ denotes the coproduct of $\cA$:
\begin{equation} (\psi \ot \psi) \circ \Com = \Com \circ \psi, \;\;\; (\rho \ot (\rho \circ \theta_1))\circ \Com = (\id \ot \theta_2) \circ \Com \circ \psi,\label{foureq1}\end{equation}
\begin{equation}  (\psi \ot \rho) \circ \Com = \Com \circ \rho, \;\;\; (\rho \ot (\psi \circ \theta_1))\circ \Com = (\id \ot \theta_2) \circ \Com \circ \rho. \label{foureq2}\end{equation}
Applying the counit to the right leg of the first equality in \eqref{foureq2} yields
\[ \rho = \psi \circ T,\]
where $T:=(\id \ot ( \epsilon \circ \rho))\circ \Com:\cA \to \cA$. The map $T$ is an automorphism of $\cA$. Moreover, for any $\gamma \in \Gamma$ we have
\[T(\gamma) = \gamma \lambda(\gamma),\]
where $\lambda(\gamma):=\epsilon\circ \rho(\gamma)$ defines a character of $\Gamma$. Apply then the second formula in \eqref{foureq2} to a $\gamma\in \Gamma$, obtaining
\[ \psi(\gamma)\lambda(\gamma) \ot \psi(\theta_1(\gamma)) = ((\id \ot \theta_2) \circ \Com) ( \psi(\gamma) \lambda(\gamma)).\]
Using the first equality in \eqref{foureq1} yields now
\[ \psi(\gamma)\lambda(\gamma) \ot \psi(\theta_1(\gamma)) = \lambda(\gamma) \psi(\gamma) \ot \theta_2(\psi(\gamma)).\]
As $\lambda$ is a character, so cannot be equal to $0$ at any point, we obtain
\[ \psi(\theta_1(\gamma)) = \theta_2(\psi(\gamma)),\]
so further by arbitrariness of $\gamma$ and linear extension we see that $\psi \circ \theta_1 = \theta_2 \circ\psi$ and putting $\theta:=\psi$ ends the proof.
\end{proof}

Note that the Hopf $*$-algebra automorphisms of $\bc[\Gamma]$ are necessarily induced by automorphisms of $\Gamma$, and in particular their conjugacy is equivalent to the conjugacy of the underlying group automorphisms.

\subsection{Doubling of compact quantum groups and connections to quantum symmetry groups}

It is easy to check,  for example using the explicit formulas for the Haar state on $\widetilde{\cA}$, that if $\cA$ is a CQG algebra, then so is $\widetilde{\cA}$. Thus we can exploit the construction above as the basis of the following definition.

\begin{definition}
Let $\QG$ be a compact quantum group and suppose we are given an order two automorphism of $\QG$, say represented by a map $\theta:\C(\QG)\to \C(\QG)$. Then we define the compact quantum group $\QG_{\theta}$ by the equality $\Pol(\QG_{\theta}):=\widetilde{\Pol(\QG)}$, where the doubling construction on the right side of the equation refers to the Hopf $*$-algebra morphism $\theta|_{\Pol(\QG)}$.
\end{definition}

We leave it to the reader to verify that if $\QG$ is in fact a compact group (in other words, $\C(\QG)$ is commutative), then $\QG_{\theta}$ is also a compact group and we have simply $G_{\theta} = G \rtimes_{\theta} \bz/_{2\bz}$.

The arguments above show that $\QG_{\theta}$ always acts on $\C(\QG)$. We will now consider a special case, which is closely related to the fact that the doubling construction often appears in the computation of quantum isometry groups of duals of finitely generated groups  (see \cite{bhs}, \cite{Symmetric} and \cite{Chin}).

\begin{proposition} \label{doubsym}
Let $\Gamma$ be a finitely generated discrete group, with a fixed finite generating set $S$. Assume that $\theta:\Gamma \to \Gamma$ is an order two automorphism leaving the set $S$ invariant. Denote the induced automorphism of the Hopf $*$-algebra $\bc[\Gamma]$ by the same symbol. Then the quantum group $\widehat{\Gamma}_{\theta}$ (recall that we define the compact quantum group $\widehat{\Gamma}$ via the identification $\Pol(\widehat{\Gamma})\approx \bc[\Gamma]$) acts on $\C*_r(\Gamma)$, preserving the partition $\mathcal{F}_S$ (see the last paragraph of Subsection \ref{prelCQG}). If moreover $\theta$ is not equal to identity then the corresponding action is faithful, so $\widehat{\Gamma}_{\theta}$ is a quantum subgroup of $\QI(\widehat{\Gamma}, S)$.
\end{proposition}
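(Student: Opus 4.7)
My plan is to take the coaction built in Subsection~\ref{doublss} and specialise it to the current setting, then separately verify filtration preservation and faithfulness.

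Setting $\cA=\bc[\Gamma]$, formula~\eqref{coact} gives the $*$-algebra map $\alpha\colon\bc[\Gamma]\to\bc[\Gamma]\odot\widetilde{\bc[\Gamma]}$ determined on group elements by
\[
\alpha(\gamma)=\gamma\ot\xi(\gamma)+\theta(\gamma)\ot\eta(\theta(\gamma)).
\]
Because $\theta$ leaves $S$ invariant it preserves the word-length function, so each finite set $F_n:=\{\gamma\in\Gamma:|\gamma|_S=n\}$ is stable under $\theta$, and consequently $\alpha(V_n^{\mathcal{F}_S})\subset V_n^{\mathcal{F}_S}\odot\widetilde{\bc[\Gamma]}$. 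To promote $\alpha$ to a genuine action of $\widehat{\Gamma}_\theta$ on $\C^*_r(\Gamma)$ I would verify that the canonical trace $\tau$ is $\alpha$-invariant: a direct calculation gives $(\tau\ot\id)\alpha(\gamma)=\delta_{\gamma,e}\I$, which via the GNS picture produces the desired bounded $*$-homomorphic extension to the reduced \cst-algebra, while the Podle\'s condition is inherited from the fact that $\widetilde{\bc[\Gamma]}$ is a CQG algebra.

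For the faithfulness claim I assume $\theta\neq\id$; since $S$ generates $\Gamma$ there must then be some $s_0\in S$ with $\theta(s_0)\neq s_0$. Applying the dual functional $\delta_\gamma\in(V_{|\gamma|_S}^{\mathcal{F}_S})^*$ to $\alpha(\gamma)$ produces $\xi(\gamma)\in R_\alpha(\widehat{\Gamma}_\theta)$ whenever $\theta(\gamma)\neq\gamma$, and an analogous evaluation yields $\eta(\gamma)$ in the same situation; for $\theta$-fixed $\gamma$ one initially recovers only the sum $\Xi(\gamma)=\xi(\gamma)+\eta(\gamma)$. To separate the two summands I exploit that $R_\alpha$ is an algebra and that in the direct-sum decomposition $\widetilde{\bc[\Gamma]}\cong\bc[\Gamma]\oplus\bc[\Gamma]$ one has $\xi(a)\eta(b)=0$: for any $\theta$-fixed $\gamma$,
\[
\xi(s_0)\Xi(\gamma)=\xi(s_0\gamma),\qquad\xi(s_0^{-1})\xi(s_0\gamma)=\xi(\gamma),
\]
so $\xi(\gamma)\in R_\alpha$ for every $\gamma\in\Gamma$, and symmetrically $\eta(\gamma)\in R_\alpha$. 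Hence $R_\alpha=\widetilde{\bc[\Gamma]}=\Pol(\widehat{\Gamma}_\theta)$, the action is faithful, and the remark following Theorem~\ref{qsymV} yields the quantum subgroup statement.

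The main subtlety I anticipate is this final separation step: a naive evaluation recovers only $\Xi(\gamma)$ on the $\theta$-fixed part, and without the assumption $\theta\neq\id$ there is nothing with which to multiply to disentangle $\xi$ from $\eta$. The non-fixed generator $s_0$ produced by $\theta\neq\id$ is exactly what breaks the symmetry between the two direct summands, and once this observation is in place the remainder of the argument is mechanical.
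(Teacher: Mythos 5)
Your proof is correct and follows essentially the same route as the paper: the coaction \eqref{coact} preserves the word-length filtration because $\theta$ does, and faithfulness is obtained by slicing $\alpha$ and using a generator $s_0$ with $\theta(s_0)\neq s_0$ to generate both direct summands of $\bc[\Gamma]\oplus\bc[\Gamma]$. Your explicit separation step $\xi(s_0^{-1})\xi(s_0)\Xi(\gamma)=\xi(\gamma)$ is just a spelled-out version of the paper's ``easy to check'' multiplication of the elements $(s,0)$, $(s^{-1},0)$, $(t,t)$, so no genuinely different idea is involved.
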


\begin{proof}
It is easy to see that $\theta$ induces an order two automorphism of $\bc[\Gamma]$ (given simply by the linear extension of the original map). As the resulting automorphism  preserves the canonical trace, it extends also to an automorphism of the reduced group \cst-algebra $\C^*_r(\Gamma)$. Let then $\widehat{\Gamma}_{\theta}$ denote the compact quantum group arising via the doubling construction, so that $\C(\widehat{\Gamma}_{\theta})= \C^*_r(\Gamma) \oplus \C^*_r(\Gamma)$. The formula \eqref{coact} implies that the action of $\widehat{\Gamma}_{\theta}$ on $\C^*_r(\Gamma)$ is given by the expressions:
\begin{equation} \alpha(\gamma) = \gamma \ot (\gamma, 0) + \theta(\gamma) \ot (0,\theta(\gamma)), \;\;\; \gamma \in \Gamma.\label{actformula}\end{equation}
Thus the action $\alpha$ preserves the orthogonal filtration of $\C^*_r(\Gamma)$ induced by the word-length partition if and only if $\theta$ preserves the length function. It is easy to see that the assumptions of the proposition imply the latter fact.

By the remark stated after Theorem \ref{qsymV} it remains to show that if $\theta$ is not constant on $S$, then the action described in the first part of the proof is faithful. So let $s\in S$ be such that $\theta(s)\neq s$. Choosing a suitable functional on the subspace $\Lin S\subset \C^*_r(\Gamma)$, we can assure that $R_{\alpha}(\widehat{\Gamma}_{\theta})$  (introduced in the paragraph after Definition \ref{actdef}) contains elements of the form $(s,0)$, $(s^{-1}, 0)$, $(0,s), (0,s^{-1})$. On the other hand if $t\in S$ and $\theta(t)=t$, then similarly
the elements of the form $(t,t)$ and $(t^{-1}, t^{-1})$ belong to $R_{\alpha}(\widehat{\Gamma}_{\theta})$. It is easy to check that considering in the same way all elements of $S$ one can show that $R_{\alpha}(\widehat{\Gamma}_{\theta}) =\bc[\Gamma]\oplus \bc[\Gamma]$.

\end{proof}

Note that the assumption that $\theta$ is non-trivial is indeed necessary to ensure that the action of $\widehat{\Gamma}_{\theta}$  on $\C^*_r(\Gamma)$ is faithful. In several cases it turns out that $\widehat{\Gamma}_{\theta} = \QI(\widehat{\Gamma}, S)$; we will see some instances of this in the following sections.


\section{Generalities on limits of quantum symmetry groups}\label{gensect}

In this section we study certain projective limits of quantum symmetry groups. We begin with recalling a theorem proved in \cite{bgs}. For the precise terminology used below we refer to the articles \cite{bgs} and \cite{Deb2}.

\begin{theorem}[Theorem 1.2 of \cite{bgs}] \label{GoodDirac}
Suppose that $\sA$ is a \cst-algebra acting on a Hilbert space $\sH$ and that $D$ is a (densely defined) selfadjoint operator on $\sH$ with compact resolvent, such that $D$ has a one-dimensional eigenspace spanned by a vector $\xi$ which is cyclic and separating for $\sA$. Let $(\Alg_n)_{n\in\NN}$ be an increasing net of a unital $*$-subalgebras of $\sA$ and put $\Alg=\bigcup\limits_{n\in\NN}\Alg_n$. Suppose that $\Alg$ is dense in $\sA$ and that for each $a\in\sA$ the commutator $[D,a]$ is densely defined and bounded. Additionally put $\sH_n=\Bar{\Alg_n\xi}$, let $P_n$ denote the orthogonal projection onto $\sH_n$ and assume that each $P_n\sH$ is a direct sum of eigenspaces of $D$. Then each $(\Alg_n,\sH_n,\bigl.D\bigr|_{\sH_n})$ is a spectral triple satisfying the conditions of Theorem 2.14 of \cite{Deb2}, there exist natural compatible compact quantum group morphisms $\pi_{n,m}\colon\C\bigl(\QI(\Alg_n,\sH_n,\bigl.D\bigr|_{\sH_n})\bigr)\to\C\bigl(\QI(\Alg_m,
 \sH_m,\bigl.D\bigr|_{\sH_m})\bigr)\;(n,m\in\NN,\:n<m)$ and
\[
\QI(\sA,\sH,D)=\varprojlim\QI(\Alg_n,\sH_n,\bigl.D\bigr|_{\sH_n}).
\]
\end{theorem}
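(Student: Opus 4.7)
The plan is to verify the three clauses of the theorem in order: that each truncation is a spectral triple of the required type, that the compatible morphisms exist, and that the projective limit identification holds. For the first clause, the hypothesis that $P_n\sH$ is a direct sum of eigenspaces of $D$ guarantees that $D|_{\sH_n}$ is densely defined, self-adjoint, and has compact resolvent on $\sH_n$ (its spectrum is a subset of that of $D$, with eigenspaces sitting inside the finite-dimensional eigenspaces of $D$). The vector $\xi$ lies in $\sH_n$ (since $\Alg_n$ is unital) and is cyclic for $\Alg_n$ by the very definition $\sH_n=\overline{\Alg_n\xi}$; it remains separating for $\Alg_n$ acting on $\sH_n$ because any $a$ in the norm closure of $\Alg_n$ annihilating $\xi$ already lies in $\sA$ and is killed by the global separation hypothesis. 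Boundedness of $[D|_{\sH_n}, a]$ for $a \in \Alg_n$ is inherited from $[D,a]$, and the remaining hypotheses of \cite[Theorem 2.14]{Deb2} follow by analogous restriction.

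For the morphisms, let $n < m$ and observe that $\sH_n \subset \sH_m$ is by assumption a direct sum of eigenspaces of $D|_{\sH_m}$. The universal action of $\QG_m := \QI(\Alg_m, \sH_m, D|_{\sH_m})$ is implemented by a unitary co-representation on $\sH_m$ commuting with $D|_{\sH_m}$; restricting this co-representation to the invariant subspace $\sH_n$ yields a co-representation commuting with $D|_{\sH_n}$ that implements an action of $\QG_m$ on $\Alg_n$ satisfying all the required properties. By universality of $\QG_n := \QI(\Alg_n, \sH_n, D|_{\sH_n})$ this factors uniquely through a morphism $\pi_{n,m}\colon\C(\QG_n)\to\C(\QG_m)$, and the cocycle relation $\pi_{m,k}\comp\pi_{n,m}=\pi_{n,k}$ is then enforced by uniqueness.

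To conclude I would form $\QG_\infty := \varprojlim \QG_n$ via Lemma \ref{indqg} and produce mutually inverse morphisms with $\QI(\sA, \sH, D)$. The canonical action of $\QI(\sA, \sH, D)$ on $\sA$ restricts, by the same eigenspace-invariance argument, to compatible actions on each $\Alg_n$, hence descends to a morphism $\QI(\sA, \sH, D)\to\QG_\infty$ by the universal property of $\QG_\infty$. Conversely, the coherent family of $\QG_n$-actions assembles into a co-action of $\QG_\infty$ on the dense $*$-algebra $\Alg = \bigcup_n\Alg_n$, which I would extend by norm continuity to a genuine co-action on $\sA$; universality of $\QI(\sA, \sH, D)$ then yields the reverse morphism, and standard uniqueness shows these two are mutually inverse. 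The main obstacle lies in this final extension step: one must check that the assembled co-action on $\Alg$ is uniformly norm-bounded across levels, extends to a $*$-homomorphism $\sA\to\sA\tens\C(\QG_\infty)$ satisfying the Podle\'s non-degeneracy condition, and preserves the spectral data of $D$. It is precisely here that the eigenspace assumption on each $P_n\sH$ is indispensable, since it forces the finite-level unitary co-representations to be mutually compatible on shared eigenspaces of $D$, permitting continuous passage to the global object.
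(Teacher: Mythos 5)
A preliminary remark: the paper itself does not prove this statement. It is recalled (with corrected hypotheses) from \cite{bgs}, the subsequent remark explains that the original hypothesis there (that $D$ commutes with the projections $P_n$) is insufficient, and the paper instead proves the more general Theorem \ref{mainlimit} in the orthogonal-filtration framework, where the crucial restriction step is immediate because filtration preservation forces invariance of the subalgebras. Your sketch follows the spectral-triple route, and its skeleton (construct $\pi_{n,m}$ by restricting the universal object's action and invoking universality, assemble the limit action through the universal properties of the \cst-inductive limit as in Proposition \ref{indactQG}, then prove universality of the limit by restricting an arbitrary competitor) matches the architecture of Theorem \ref{mainlimit}. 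Your use of the corrected hypothesis is also the right one: since $\sH_n$ is a sum of \emph{full} eigenspaces of $D$, it is a sum of eigenspaces of $D|_{\sH_m}$, hence invariant under any unitary co-representation commuting with $D|_{\sH_m}$ --- exactly the point that fails under the weaker commutation assumption.

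The genuine gap is the sentence claiming that restricting the co-representation to $\sH_n$ ``implements an action of $\QG_m$ on $\Alg_n$ satisfying all the required properties.'' Invariance of the Hilbert subspace $\sH_n$ does not by itself give that the adjoint action $\mathrm{ad}_U$ maps $\Alg_n$ (or its closure) into $\Alg_n\tens\C(\QG_m)$, which is what membership in the category defining $\QI(\Alg_n,\sH_n,D|_{\sH_n})$ in the sense of \cite{Deb2} requires, nor that the resulting map is an action in the \cst-sense with the Podle\'s condition. To close this one must use the distinguished one-dimensional eigenspace $\CC\xi$ (which \emph{is} preserved, being an eigenspace of $D|_{\sH_m}$ contained in $\sH_n$) together with the cyclic-separating property, passing from $\mathrm{ad}_U(a)(\xi\tens 1)\in\sH_n\tens\C(\QG_m)$ to a statement about $\mathrm{ad}_U(a)$ itself; and even then the implication that $x\xi\in\overline{\Alg_n\xi}$ forces $x$ to lie in (the closure of) $\Alg_n$ is not automatic, since $x\mapsto x\xi$ has unbounded inverse. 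The same unproved restriction step is needed a second time, for $\QI(\sA,\sH,D)$ itself, in your universality argument, so it is the heart of the theorem rather than a technicality. Separately, you mislocate the ``main obstacle'': assembling the limit coaction requires no uniform-norm estimates, because each level map $(\rho_{n,\infty}\tens\pi_{n,\infty})\comp\alpha_n$ is a $*$-homomorphism defined on the whole level \cst-algebra, so the universal property of the inductive limit produces the extension for free and the Podle\'s condition follows by density (this is exactly Proposition \ref{indactQG}); the delicate point is the restriction step above, not the passage to the limit.
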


\begin{remark}
In fact the statement of the corresponding theorem in \cite{bgs} is incorrect: the assumption that $D$ commutes with the projections $P_n$ does not imply the existence of the quantum group morphisms from $\QI(\Alg_m,\sH_m,\bigl.D\bigr|_{\sH_m})$ to $\QI(\Alg_n,\sH_n,\bigl.D\bigr|_{\sH_n})$. 
Note, however, that in all the applications studied in \cite{bgs} the assumptions of the above correct formulation are satisfied. In fact we will show below another, more general result, which uses the language of quantum symmetry groups of orthogonal filtrations, as developed in \cite{orth}.
\end{remark}


As we will see in the last section, there are examples in which the assumptions of the above theorem are not satisfied and yet certain quantum symmetry groups form natural  projective systems. One can expect that in such situations one could also produce the action of the quantum group arising as the limit on a quantum space and possibly also identify it as a quantum symmetry group of some structure. The first step for that is provided by the following proposition.

\begin{proposition}\label{indactQG}
Suppose that $(\QG_n)_{n=1}^{\infty}$ is a sequence of compact quantum groups with compact quantum group morphisms $\pi_{n,m}$ from $\QG_m$ to $\QG_n$ satisfying the compatibility conditions as above. Let $(\sA_n)_{n=1}^{\infty}$ be a sequence of unital \cst-algebras, with connecting unital $*$-homomorphisms $\rho_{n,m}\colon\sA_n\to\sA_m$, again satisfying the natural compatibility conditions, and let $\sA_{\infty}$ denote the resulting inductive limit. Assume that for each $n\in\NN$ there is an action $\alpha_n$ of $\QG_n$ on $\sA_n$ (so that $\alpha_n\colon\sA_n\to\sA_n\tens\C(\QG_n)$) and that for all $n,m\in\NN,\:n<m$ we have
\begin{equation}\label{intertw}
\alpha_m\comp\rho_{n,m}=(\rho_{n,m}\tens\pi_{n,m})\comp\alpha_n.
\end{equation}
Then $\QG_{\infty}=\varprojlim\QG_n$ acts on $\sA_{\infty}$ by an action $\alpha_{\infty}\colon\sA_{\infty}\to\sA_{\infty}\tens\C(\QG_{\infty})$ uniquely determined by the set of conditions:
\[
\alpha_{\infty}\comp\rho_{n,\infty}=(\rho_{n,\infty}\tens\pi_{n,\infty})\comp\alpha_n,\qquad{n\in\NN}.
\]
Moreover if the connecting morphisms $\rho_{n,m}$ are injective and each of the actions $\alpha_n$ is faithful, the limit action $\alpha_{\infty}$ is also faithful.
\end{proposition}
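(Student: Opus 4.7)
The plan is to construct $\alpha_\infty$ via the universal property of the $C^*$-algebraic inductive limit, verify the action axioms on the dense subalgebra $\bigcup_n\rho_{n,\infty}(\sA_n)$, and prove faithfulness by expressing each generator of $\Pol(\QG_\infty)$ as a slice of $\alpha_\infty$ coming from the corresponding finite-level faithfulness.

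For the construction, set $\gamma_n:=(\rho_{n,\infty}\tens\pi_{n,\infty})\comp\alpha_n\colon\sA_n\to\sA_\infty\tens\C(\QG_\infty)$. Combining \eqref{intertw} with the compatibility relations $\rho_{m,\infty}\comp\rho_{n,m}=\rho_{n,\infty}$ and $\pi_{m,\infty}\comp\pi_{n,m}=\pi_{n,\infty}$ (for $n<m$) yields $\gamma_m\comp\rho_{n,m}=\gamma_n$, so the universal property of $\sA_\infty$ produces a unique $*$-homomorphism $\alpha_\infty\colon\sA_\infty\to\sA_\infty\tens\C(\QG_\infty)$ with $\alpha_\infty\comp\rho_{n,\infty}=\gamma_n$ for every $n$.

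Coassociativity of $\alpha_\infty$ is a closed condition, so it suffices to verify it on each $\rho_{n,\infty}(\sA_n)$. A direct computation, using that $\alpha_n$ is an action and that $\pi_{n,\infty}$ intertwines coproducts, reduces both $(\alpha_\infty\tens\id)\comp\alpha_\infty\comp\rho_{n,\infty}$ and $(\id\tens\Com_\infty)\comp\alpha_\infty\comp\rho_{n,\infty}$ to $(\rho_{n,\infty}\tens\pi_{n,\infty}\tens\pi_{n,\infty})\comp(\alpha_n\tens\id)\comp\alpha_n$. For the Podle\'s condition, apply the continuous $*$-homomorphism $\rho_{n,\infty}\tens\pi_{n,\infty}$ to the identity $\overline{\Lin}\,\alpha_n(\sA_n)(1\tens\C(\QG_n))=\sA_n\tens\C(\QG_n)$; since a continuous map sends a dense subset to a dense subset of the image, one deduces $\rho_{n,\infty}(\sA_n)\tens\pi_{n,\infty}(\C(\QG_n))\subset\overline{\Lin}\,\alpha_\infty(\sA_\infty)(1\tens\C(\QG_\infty))$. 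Taking the union over $n$ and invoking density of $\bigcup_n\rho_{n,\infty}(\sA_n)$ in $\sA_\infty$ and of $\bigcup_n\pi_{n,\infty}(\C(\QG_n))$ in $\C(\QG_\infty)$ finishes the verification.

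For faithfulness, injectivity of the $\rho_{n,m}$ passes to the limit, so each $\rho_{n,\infty}$ is injective. A natural candidate dense $*$-subalgebra of $\sA_\infty$ is $\mathcal{A}_\infty:=\bigcup_n\rho_{n,\infty}(\mathcal{A}_n)$, where $\mathcal{A}_n$ is the canonical algebraic core for $\alpha_n$; this is clearly stable under $\alpha_\infty$ with values in $\mathcal{A}_\infty\odot\Pol(\QG_\infty)$. Given $y\in\Pol(\QG_n)$, faithfulness of $\alpha_n$ writes $y=\sum_i(f_i\tens\id)\alpha_n(a_i)$ with $a_i\in\mathcal{A}_n$ and $f_i$ linear on $\mathcal{A}_n$; transport each $f_i$ to a functional $\tilde f_i$ on $\mathcal{A}_\infty$ via $\rho_{n,\infty}^{-1}$ on $\rho_{n,\infty}(\mathcal{A}_n)$ (well-defined by injectivity) and extend arbitrarily to the rest. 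Then
\[
(\tilde f_i\tens\id)\alpha_\infty(\rho_{n,\infty}(a_i))=(\tilde f_i\tens\id)(\rho_{n,\infty}\tens\pi_{n,\infty})\alpha_n(a_i)=\pi_{n,\infty}\bigl((f_i\tens\id)\alpha_n(a_i)\bigr),
\]
whence $R_{\alpha_\infty}(\QG_\infty)\supset\pi_{n,\infty}(\Pol(\QG_n))$ for each $n$. The identification $\Pol(\QG_\infty)=\bigcup_n\pi_{n,\infty}(\Pol(\QG_n))$ --- a standard consequence of the fact that every finite-dimensional unitary corepresentation of $\QG_\infty$ factors through some $\QG_n$ --- then forces $R_{\alpha_\infty}(\QG_\infty)=\Pol(\QG_\infty)$.

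The steps I expect to require the most care are the closed-span manipulation in the Podle\'s verification (which hinges only on continuity of $\rho_{n,\infty}\tens\pi_{n,\infty}$ and the density of the unions in each leg) and the Peter-Weyl identification $\Pol(\QG_\infty)=\bigcup_n\pi_{n,\infty}(\Pol(\QG_n))$ invoked in the faithfulness argument; the remaining computations are formal once the defining relation $\alpha_\infty\comp\rho_{n,\infty}=(\rho_{n,\infty}\tens\pi_{n,\infty})\comp\alpha_n$ is in place.
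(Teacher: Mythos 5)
Your proposal is correct and takes essentially the same route as the paper, whose proof simply defines the compatible family $\beta_n=(\rho_{n,\infty}\tens\pi_{n,\infty})\comp\alpha_n$, invokes the universal property of the inductive limit to obtain $\alpha_{\infty}$, and leaves the remaining verifications (action axioms, Podle\'s condition, faithfulness) as routine checks. Your extra detail — the density argument for the Podle\'s condition and the identification $\Pol(\QG_{\infty})=\bigcup_n\pi_{n,\infty}\bigl(\Pol(\QG_n)\bigr)$ used for faithfulness — correctly fills in exactly what the paper omits.
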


\begin{proof}
The result is a straightforward consequence of the universal property of the inductive limit. Indeed, for each $n\in\NN$ we can define a unital $*$-homomorphism $\beta_n\colon\sA_n\to\sA_{\infty}\tens\C(\QG_{\infty})$ by the formula
\[
\beta_n=(\rho_{n,\infty}\tens\pi_{n,\infty})\comp\alpha_n,
\]
where $\pi_{n,\infty}\colon\C(\QG_n)\to\C(\QG_{\infty})$ are the canonical quantum group morphisms. It is easy to check that for all $n,m\in\NN,\:n<m$,
\[
\beta_m\comp\rho_{n,m}=\beta_n,
\]
so that they induce a unital $*$-homomorphism $\alpha_{\infty}\colon\sA_{\infty}\to\sA_{\infty}\tens\C(\QG_{\infty})$. It is also easy to check that it satisfies 
\end{proof}

In the remaining part of this section we will formulate and prove a generalisation of Theorem \ref{GoodDirac} to the context of quantum symmetry groups associated with orthogonal filtrations.
Suppose now again that $(\sA_n)_{n=1}^{\infty}$ is an inductive system of unital \cst-algebras, with the connecting unital $*$-homomorphisms $\rho_{n,m}\colon\sA_n\to\sA_m$. Assume that each $\sA_n$ is equipped with a faithful state $\phi_n$, an orthogonal filtration $\mathcal{V}_n$ (with respect to $\phi_n$) and that we have the following compatibility condition: for each $n,m\in\NN,\:n<m$, and each $V\in\mathcal{V}_n$ we have $\rho_{n,m}(V)\in\mathcal{V}_m$. Note that this implies the following two facts:
\begin{enumerate}
\item for all $n,m$ as above we have $\phi_m\comp\rho_{n,m}=\phi_m$,
\item each of the maps $\rho_{n,m}$ is injective (and thus so are the maps $\rho_{n,\infty}$).
\end{enumerate}
If the above assumptions hold, we will say that $(\sA_n,\mathcal{V}_n)_{n=1}^{\infty}$ is \emph{an inductive system of \cst-algebras equipped with orthogonal filtrations}. We can then speak about a natural inductive limit filtration $\mathcal{V}_{\infty}$ of the limit algebra $\sA_{\infty}$, defined in the following way: a subspace $V\subset\sA_{\infty}$ belongs to $\mathcal{V}_{\infty}$ if and only if there exists $n\in\NN$ and $V_n\in\mathcal{V}_n$ such that $V=\rho_{n,\infty}(V_n)$. The arising filtration satisfies then the orthogonality conditions with respect to the inductive limit state $\phi_{\infty}\in\sA_{\infty}^*$.

Note that there is one subtlety here: although we can always construct the inductive limit filtration, the inductive limit state need not be faithful on $\sA_{\infty}$, so we need not be in the framework studied in \cite{orth} and described in Definition \ref{orthfilt} -- we only know that $\phi_{\infty}$ is faithful on the dense subalgebra $\bigcup\limits_{n\in\NN}\bigcup\limits_{V\in\mathcal{V}_n}\rho_{n,\infty}(V)$ of $\sA_{\infty}$. Suppose in addition that $\phi_{\infty}$ is a trace (equivalently, each of the states $\phi_n$ is tracial). Then it is automatically faithful (as its null space, $\{a\in \sA_{\infty}:\phi_{\infty}(a^*a)=0\}$, is an ideal, cf.~\cite[Proposition II.8.2.4]{bl}). We can thus formulate and prove the following theorem.

\begin{theorem}\label{mainlimit}
Let $(\sA_n,\mathcal{V}_n)_{n=1}^{\infty}$ be an inductive system of \cst-algebras equipped with orthogonal filtrations and assume that each of the states defining the orthogonality of the filtrations $\mathcal{V}_n$ is tracial. Let $\mathcal{V}_{\infty}$ denote the orthogonal filtration of $\sA_{\infty}$ arising as the inductive limit. Denote the respective quantum symmetry groups of $(\sA_n,\mathcal{V}_n)$ and $(\sA_{\infty},\mathcal{V}_{\infty})$ by $\QG_n$ and $\QG$. Then
\[
\QG =\varprojlim\QG_n.
\]
\end{theorem}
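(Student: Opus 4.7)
The strategy is to produce mutually inverse morphisms between $\QG$ and $\QG':=\varprojlim\QG_n$ by repeated appeal to the universal properties in Theorem \ref{qsymV}, Lemma \ref{indqg}, and Proposition \ref{indactQG}.

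First I would set up the projective system. For $n<m$ the universal action $\alpha_m$ of $\QG_m$ on $\sA_m$ maps every $\rho_{n,m}(V)$, $V\in\mathcal{V}_n$, into $\rho_{n,m}(V)\odot\C(\QG_m)$ (because $\rho_{n,m}(V)\in\mathcal{V}_m$), so identifying $\sA_n$ with the subalgebra $\rho_{n,m}(\sA_n)$ yields a filtration-preserving action of $\QG_m$ on $\sA_n$. Universality of $\QG_n$ produces a unique morphism $\pi_{n,m}\colon\QG_m\to\QG_n$ characterised by
\[
\alpha_m\comp\rho_{n,m}=(\rho_{n,m}\ot\pi_{n,m})\comp\alpha_n,
\]
and the same uniqueness forces compatibility $\pi_{n,m}\comp\pi_{m,k}=\pi_{n,k}$ for $n<m<k$. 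Lemma \ref{indqg} then yields $\QG'$ together with the canonical morphisms $\pi_{n,\infty}\colon\QG'\to\QG_n$. The displayed identity is precisely the hypothesis of Proposition \ref{indactQG}, so $\QG'$ acts on $\sA_\infty$ by an action $\alpha_\infty$ satisfying $\alpha_\infty\comp\rho_{n,\infty}=(\rho_{n,\infty}\ot\pi_{n,\infty})\comp\alpha_n$. Since every $V\in\mathcal{V}_\infty$ is of the form $\rho_{n,\infty}(V_n)$ for some $V_n\in\mathcal{V}_n$, this identity immediately gives $\alpha_\infty(V)\subset V\odot\C(\QG')$, so $\alpha_\infty$ preserves $\mathcal{V}_\infty$. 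The universal property of $\QG$ then supplies a compact quantum group morphism $\phi\colon\QG'\to\QG$.

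For the reverse direction, the universal action $\alpha$ of $\QG$ on $\sA_\infty$ preserves every $\rho_{n,\infty}(V)$, $V\in\mathcal{V}_n$; by multiplicativity and continuity it then preserves $\rho_{n,\infty}(\sA_n)$, so injectivity of $\rho_{n,\infty}$ yields a $*$-homomorphism $\beta_n\colon\sA_n\to\sA_n\ot\C(\QG)$ which satisfies the coaction equation and preserves $\mathcal{V}_n$. That $\beta_n$ is moreover a genuine C*-action (i.e.\ satisfies the Podle\'s condition) requires invoking the orthogonal-filtration machinery of \cite{orth}: the restriction of $\beta_n$ to each finite-dimensional $V\in\mathcal{V}_n$ is automatically a corepresentation of $\QG$, so the resulting coaction on the dense subalgebra $\Lin\bigl(\bigcup_V V\bigr)$ is nondegenerate and extends to the whole \cst-algebra. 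Universality of each $\QG_n$ then produces morphisms $\QG\to\QG_n$, and uniqueness in that universality forces compatibility with the $\pi_{n,m}$, so Lemma \ref{indqg} assembles them into a morphism $\psi\colon\QG\to\QG'$.

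Finally I would verify that $\phi$ and $\psi$ are mutually inverse. The composition $\psi\comp\phi\colon\QG\to\QG$ intertwines $\alpha$ with itself when tested on each $\rho_{n,\infty}(V)$, so Theorem \ref{qsymV} forces $\psi\comp\phi=\id_\QG$; similarly $\phi\comp\psi=\id_{\QG'}$ because its composition with each $\pi_{n,\infty}$ recovers $\pi_{n,\infty}$, and Lemma \ref{indqg} forces this to be the identity. The main delicate step I expect is the verification in the third paragraph that $\beta_n$ is a bona fide compact quantum group action rather than merely an algebraic coaction: the Podle\'s cancellation condition does not pass to subalgebras purely formally, and the argument relies essentially on the orthogonal-filtration framework of \cite{orth} and on the tracial hypothesis (which guarantees that $\phi_\infty$ is faithful, so that $(\sA_\infty,\mathcal{V}_\infty)$ actually fits the setup of Definition \ref{orthfilt}).
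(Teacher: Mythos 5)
Your proposal is correct and takes essentially the same route as the paper: the connecting morphisms $\pi_{n,m}$ arise by restricting $\alpha_m$ to the invariant subalgebras $\rho_{n,m}(\sA_n)$ and invoking universality of $\QG_n$, the limit action comes from Proposition \ref{indactQG} and is checked to preserve $\mathcal{V}_{\infty}$, and the comparison with $\QG$ is made by restricting the universal action on $\sA_{\infty}$ to the subalgebras $\rho_{n,\infty}(\sA_n)$. The only cosmetic difference is that the paper verifies directly that $(\QG_{\infty},\alpha_{\infty})$ is the universal object of $\mathcal{C}_{\sA_{\infty},\mathcal{V}_{\infty}}$ for an arbitrary $(\beta,\QH)$, whereas you specialize to $\QH=\QG$ and exhibit the two mutually inverse morphisms explicitly -- the same argument in different packaging.
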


\begin{proof}
Begin by showing that the expression on the right hand side of the displayed formula above makes sense, i.e.~that for all $m,n\in\NN,\:m>n$, there exist natural quantum group morphisms from $\QG_m$ to $\QG_n$. Recall that $\QG_n$ is defined as the universal object in the category $\mathcal{C}_{\sA_n,\mathcal{V}_n}$ (the corresponding action will be denoted by $\alpha_n$, so that $\alpha_n\colon\sA_n\to\sA_n\tens\C(\QG_n)$). Therefore it effectively suffices to show that $\QG_m$ acts on $\sA_n$ in a filtration preserving fashion. Fix $m>n$ and consider the map $\bigl.\alpha_m\bigr|_{\rho_{n,m}(\sA_n)}$. As $\alpha_m$ preserves the filtration $\mathcal{V}_m$, it in particular preserves the linear span of the set $\{\rho_{n,m}(V):V\in \mathcal{V}_n\}$. But the latter is a dense unital $*$-subalgebra of $\rho_{n,m}(\sA_n)$, so we have
\[
\alpha_m\bigl(\rho_{n,m}(\sA_n)\bigr)\subset\rho_{n,m}(\sA_n)\tens\C(\QG_m).
\]
Thus we can define $\beta_{n,m}\colon\sA_n\tens\sA_n\tens\C(\QG_m)$ by the formula
\begin{equation}\label{betanm1}
\beta_{n,m} =({\rho_{n,m}}^{-1}\tens\id_{\C(\QG_m)})\comp\alpha_m\comp\rho_{n,m}.
\end{equation}
It is easy to check that in fact $\beta_{n,m}$ defines an action of $\QG_m$ on $\sA_n$ preserving the filtration $\mathcal{V}_n$. Thus there exists a unique quantum group morphism $\pi_{n,m}\colon\C(\QG_n)\to\C(\QG_m)$ such that
\begin{equation}\label{betanm2}
(\id_{\sA_n}\tens\pi_{n,m})\comp\alpha_n=\beta_{n,m}.
\end{equation}
Comparison of \eqref{betanm1} and \eqref{betanm2} implies that $\pi_{n,m}\colon\C(\QG_n)\to\C(\QG_m)$ is the unique $*$-homomorphism such that
\[
(\rho_{n,m}\tens\id_{\C(\QG_m)})\comp(\id_{\sA_n}\tens\pi_{n,m})\comp\alpha_n=\alpha_m\comp\rho_{n,m}.
\]
The uniqueness in the above formula implies that $\pi_{m,k}\comp\pi_{n,m}=\pi_{n,k}$ for all $n,m,k\in\NN,\:n<m<k$. Denote the limit quantum group for the system $(\QG_n)_{n=1}^{\infty}$ (with the connecting maps $\pi_{n,m}$) by $\QG_{\infty}$. Proposition \ref{indactQG} implies that $\QG_{\infty}$ acts on $\sA_{\infty}$ by the action $\alpha_{\infty}$ such that
\[
\alpha_{\infty}\comp\rho_{n,\infty}=(\rho_{n,\infty}\tens\pi_{n,\infty})\comp\alpha_n,\qquad{n\in\NN}.
\]
It is easy to check that the action $\alpha_{\infty}$ preserves the filtration $\mathcal{V}_{\infty}$.

It remains to show that the pair $(\QG_{\infty},\alpha_{\infty})$ is the universal object in the category  $\mathcal{C}_{\sA_{\infty}, \mathcal{V}_{\infty}}$. Assume than that $(\beta,\QH)$ belongs to $\mathcal{C}_{\sA_{\infty},\mathcal{V}_{\infty}}$. This means that $\beta\colon\sA_{\infty}\to\sA_{\infty}\tens\C(\QH)$ is an action which preserves the filtration $\mathcal{V}_{\infty}$. Exactly as in the first part of the proof, this implies that $\beta\bigl(\rho_{n,\infty}(\sA_n)\bigr)\subset\rho_{n,\infty}(\sA_n)\tens\C(\QH)$. Construct an action of $\QH$ on $\sA_n$ by the formula
\begin{equation}\label{bnm1}
\beta_{n,\infty}=({\rho_{n,\infty}}^{-1}\tens\id_{\C(\QH)})\comp\beta\comp\rho_{n,\infty}.
\end{equation}
As before, $\beta_{n,\infty}$ preserves the filtration $\mathcal{V}_n$, so that there exists a unique quantum group morphism $\gamma_{n}\colon\C(\QG_n)\to\C(\QH)$ such that
\begin{equation}\label{bnm2}
(\id_{\sA_n}\tens\gamma_{n})\comp\alpha_n=\beta_{n,\infty}.
\end{equation}
Easily checked compatibility properties and the universal property of $\QG_{\infty}$ imply existence of a unique $*$-homomorphism $\gamma\colon\C(\QG_{\infty})\to\C(\QH)$ such that $\gamma_n=\gamma\comp\pi_{n,\infty},\;(n\in\NN)$. It is easy to check that $\gamma$ intertwines respective actions of $\QH$ and $\QG_{\infty}$ on $\sA_{\infty}$. This ends the proof.
\end{proof}

It follows from \cite[Theorem 2.10]{orth} that the compact quantum group constructed as the limit in the above theorem is necessarily of Kac type. Note that although we assume that the connecting morphisms in the inductive system $(\sA_n)_{n=1}^{\infty}$ are injective, the connecting morphisms $\C(\QG_n)\to\C(\QG_{n+1})$ in general need not be injective (see Section \ref{p-adic}).

\section{Quantum symmetry groups associated with the algebras of continuous functions on the group of $p$-adic numbers} \label{p-adic}

In this section we present an elementary example of a construction from Theorem \ref{mainlimit} in the context of quantum symmetry groups acting on quantum group duals in a length preserving fashion (\cite{bhs}, see also Subsection 1.1 above). It will turn out that the quantum symmetry groups arising in the respective limit are usual compact groups.

Choose a prime number $p\in \NN$ (in fact it is not important that $p$ is a prime, we choose it so rather for  traditional reasons). Recall the construction
of p-adic integers: these form an abelian locally compact group arising as the inverse limit of the cyclic groups $(\Zpn)_{n=1}^{\infty}$ and will be denoted by $\Pad$ (note that this notation is not compatible with that used in \cite{bhs}!). The connecting surjective homomorphisms $j_n$ from $\Zpnplus$ to $\Zpn$ are given by the `reduction modulo $p^n$'. To simplify the notation write $\Gamma_l$ for $\widehat{\Zl}$ ($l\in \NN$). It is easy to check that if we denote the canonical generators of $\Gamma_l$ by $e_l$ (using a standard Fourier identification of $\Gamma_l$ with a cyclic abelian group of order $l$), then the dual (injective) homomorphism $\widehat{j_n}: \Gamma_{p^n} \to \Gamma_{p^{n+1}} $ is given by the formula
\[ \widehat{j_n} (e_{{p^n}}) = p e_{p^{n+1}}.\]
Denote the  elements of the group \cst-algebra $\C^*(\Gamma_l)$  corresponding to $t e_l$ by $\stackrel{l}{\lambda}_t$ ($t=0,1,\ldots,l-1$) and let the injective morphism corresponding to $\widehat{j_n}$ on the level of group \cst-algebras be denoted by $\iota_n$, so that $\iota_n: \C^*(\Gamma_{p^n})\to \C^*(\Gamma_{p^{n+1}})$ is given by
\begin{equation} \label{iotas} \iota_n (\stackrel{p^n}{\lambda}_1) = \stackrel{p^{n+1}}{\lambda}_p.\end{equation}

The next result is effectively just a rephrasing of the definition of $\Pad$.

\begin{proposition} \label{limitPad}
The algebra of continuous functions on $\Pad$ is naturally isomorphic (also as the algebra of functions on a compact quantum group)
to the inductive limit of the group \cst-algebras $\C^*(\Gamma_{p^n})$, with the connecting  morphisms given by the formula \eqref{iotas}.
\end{proposition}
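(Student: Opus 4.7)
The plan is to exploit the fact that everything here is classical and abelian, so Pontryagin duality converts the entire statement into the defining universal property of $\Pad$. First, since each $\Gamma_{p^n}$ is a finite abelian group, the group $\cst$-algebra $\C^*(\Gamma_{p^n})$ is commutative, and the Fourier transform provides a canonical isomorphism of compact quantum groups $\C^*(\Gamma_{p^n})\cong\C(\widehat{\Gamma_{p^n}})=\C(\Zpn)$, the coproduct on the right being the one dual to the addition on $\Zpn$. Under this identification I would check that the connecting morphism $\iota_n$ of \eqref{iotas} corresponds to precomposition with the reduction map $j_n\colon\Zpnplus\to\Zpn$: evaluating the character $\iota_n(\stackrel{p^n}{\lambda}_1)=\stackrel{p^{n+1}}{\lambda}_p$ on $k\in\Zpnplus$ gives $e^{2\pi{i}pk/p^{n+1}}=e^{2\pi{i}(k\!\!\mod{p^n})/p^n}$, which is exactly $j_n^*$ applied to the character $k\mapsto{e}^{2\pi{i}k/p^n}$ on $\Zpn$.

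Next, I would invoke the standard contravariant equivalence between the category of commutative unital $\cst$-algebras with $*$-homomorphisms and that of compact Hausdorff spaces with continuous maps, under which inductive limits in the former correspond to projective limits in the latter. Applied to our system this yields
\[
\varinjlim\C^*(\Gamma_{p^n})\cong\varinjlim\C(\Zpn)\cong\C\bigl(\varprojlim\Zpn\bigr)=\C(\Pad),
\]
where the last equality is simply the definition of $\Pad$ together with the fact that $j_n$ is the same connecting map used to define the $p$-adic integers.

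It remains to verify that the isomorphism above is one of compact quantum groups, i.e.\ intertwines the coproducts. Since the coproduct on each $\C(\Zpn)$ is by construction dual to the group law on $\Zpn$, and the transition maps $j_n$ are group homomorphisms, the compatibility of coproducts with the inductive system is automatic; the coproduct on $\varinjlim\C(\Zpn)$ produced by Lemma~\ref{indqg} is then necessarily the one dual to addition on $\varprojlim\Zpn=\Pad$, which is precisely the standard coproduct on $\C(\Pad)$.

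I do not expect any serious obstacle here: the only point requiring care is the bookkeeping needed to confirm that the explicit formula \eqref{iotas} really does dualise to the reduction map $j_n$ (so that the two projective systems identified above are the same), and that the coproducts, Haar states, and quantum group structures correspond correctly along the identifications. All of this is routine once the Pontryagin dual picture is in place.
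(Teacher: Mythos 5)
Your argument is correct and is essentially the paper's own (implicit) reasoning: the paper offers no written proof, stating only that the proposition is "effectively just a rephrasing of the definition of $\Pad$", and your Pontryagin/Gelfand duality argument — identifying $\C^*(\Gamma_{p^n})\cong\C(\Zpn)$, checking that $\iota_n$ dualises to the reduction map $j_n$, and passing inductive limits of commutative \cst-algebras to projective limits of spaces together with the coproducts — is exactly that rephrasing made explicit. No gaps; the verification that \eqref{iotas} corresponds to $j_n^*$ on the generating character settles the only point that genuinely needed checking.
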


\begin{remark}
It follows from the construction that $\C^*(\Pad)$ is isomorphic to the algebra of continuous functions on the Cantor set, $\C(\mathfrak{C})$; note however that the quantum symmetry group we construct below is different from the one acting on $\C(\mathfrak{C})$ considered in Theorem 3.1 of \cite{bgs}.
\end{remark}

The groups $\Zpn$ are of course finitely generated discrete groups and as such fit into the framework studied in \cite{bhs}. In each case the generating set will be chosen to be $\{e_l,-e_l\}$ and we will consider the partition of  $\Gamma_l$ into sets of equal word-lengths. Note the following key fact: if two elements $\gamma, \gamma' \in \Gamma_{p^n}$ have equal word-length, then so do $\widehat{j_n}(\gamma),
\widehat{j_n}(\gamma')\in \Gamma_{p^{n+1}}$. This implies the following extension of Proposition \ref{limitPad}.

\begin{proposition} \label{limitPadfilt}
Consider for each $n\in \NN$ the orthogonal filtration $\mathcal{V}_n$ of $\C^*(\Gamma_{p^n})$ induced by the partition of $\Gamma_{p^n}$ into sets of equal word-length.
Then $(\C^*(\Gamma_{p^n}), \mathcal{V}_n)_{n=1}^{\infty}$, with connecting maps defined in \eqref{iotas} is an inductive system of \cst-algebras equipped with orthogonal filtrations.
\end{proposition}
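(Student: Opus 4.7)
The plan is to verify the two items packaged into the definition of an inductive system of \cst-algebras equipped with orthogonal filtrations: first, that each individual pair $(\C^*(\Gamma_{p^n}),\mathcal{V}_n)$ is a \cst-algebra equipped with an orthogonal filtration; and second, that each connecting map $\iota_n$ sends every member of $\mathcal{V}_n$ to a member of $\mathcal{V}_{n+1}$ (by composition this is enough to handle $\iota_{m-1}\comp\cdots\comp\iota_n$ for any $m>n$). The first item follows at once from Lemma \ref{filtGamma}: the word-length function relative to the symmetric generating set $\{e_{p^n},-e_{p^n}\}$ partitions the finite abelian group $\Gamma_{p^n}$ into finite sets, with $\{e\}$ appearing as the length-zero block, and the canonical trace on $\C^*(\Gamma_{p^n})$ is faithful and tracial.

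For the second item I would identify $\Gamma_{p^n}$ with $\ZZ/p^n\ZZ$ so that $e_{p^n}$ corresponds to $1$. Then for $t\in\{0,\dots,p^n-1\}$ the word length of $t$ equals $\min(t,\,p^n-t)$, and I denote by $V_k^n\subset\C^*(\Gamma_{p^n})$ the span of the elements $\stackrel{p^n}{\lambda}_t$ with that length equal to $k$. From \eqref{iotas} and multiplicativity of $\iota_n$ one obtains $\iota_n(\stackrel{p^n}{\lambda}_t)=\stackrel{p^{n+1}}{\lambda}_{pt}$, and a direct calculation in $\ZZ/p^{n+1}\ZZ$ gives
\[
\min(pt,\,p^{n+1}-pt)=p\cdot\min(t,\,p^n-t).
\]
Hence $\iota_n$ scales word length exactly by the factor $p$, so $\iota_n(V_k^n)\subset V_{pk}^{n+1}$.

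The mildly delicate step, and the one I would single out as the main point, is the reverse inclusion: I need equality, not merely containment, in order to place $\iota_n(V_k^n)$ inside the family $\mathcal{V}_{n+1}$ as a named member. For this I would observe that any element of $\Gamma_{p^{n+1}}$ of length $pk$ is one of $pk$ or $p^{n+1}-pk=p(p^n-k)$, both of which are divisible by $p$ and hence lie in the image of $\widehat{j_n}$. Consequently $\iota_n(V_k^n)=V_{pk}^{n+1}\in\mathcal{V}_{n+1}$; the one-dimensional edge case at $p=2$, $k=2^{n-1}$ (where $V_k^n$ is spanned by the unique element of order two) is covered by the same formula. Iterating, $(\iota_{m-1}\comp\cdots\comp\iota_n)(V_k^n)=V_{p^{m-n}k}^m\in\mathcal{V}_m$ for all $m>n$, completing the verification.
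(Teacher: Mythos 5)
Your proof is correct and follows essentially the same route as the paper, which disposes of the proposition via the single observation (stated just before it) that $\widehat{j_n}$ preserves equality of word-lengths -- equivalently, that it scales length by $p$ and carries the level set $\{\pm k\}$ of $\Gamma_{p^n}$ onto the level set $\{\pm pk\}$ of $\Gamma_{p^{n+1}}$. Your explicit check that the image of each $V_k^n$ is \emph{equal to} (not merely contained in) a member of $\mathcal{V}_{n+1}$ is exactly the point the paper leaves implicit, and it is handled correctly, including the one-dimensional case.
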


Thus we can ask about the quantum symmetry group of the limit orthogonal filtration $\mathcal{V}_{\infty}$ of $\C(\Pad)$. For that we need first to recall the form taken by the quantum symmetry (or isometry) groups at each stage of the construction. This was established in \cite{bhs}. Below we reformulate Theorem 3.1 of that paper in our language. Note that the actions of a classical group $G$ on the algebra $\C^*(\Gamma_l)\approx \C(\bz/_{l\bz})$ correspond to actions of $G$ on the set $\bz/_{l\bz}$.

\begin{theorem}
\label{finsym}
Let $l\in \NN$, $l\neq 1,2,4$. Then the quantum isometry group $\QI(\widehat{\Gamma_l})$, where $\Gamma_l$ is equipped with the generating set $\{e_l,-e_l\}$
is the (classical) group $G_l= \bz/_{l\bz} \rtimes \bz/_{2\bz}$, with the action of $\bz/_{2\bz}$  on $\bz/_{l\bz}$ given by the inverse automorphism ($t\mapsto t^{-1}$). Moreover the corresponding action of $G_l$ on the set $\bz/_{l\bz}$ is given by the formulas:
\[ (k,0) \cdot t = k+t, \;\;\; (k,1) \cdot t = k-t, \;\; k,t=0,\ldots,l-1.\]
\end{theorem}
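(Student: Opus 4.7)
The plan is to reduce the claim to Theorem 3.1 of \cite{bhs}. The key observation is that the orthogonal filtration of $\C^*(\Gamma_l)$ induced by the word-length partition with respect to $\{e_l,-e_l\}$ coincides with the decomposition of $\bc[\Gamma_l]$ into eigenspaces of the Dirac operator $D_l$ used in \cite{bhs}: the eigenvalue of $D_l$ on $\stackrel{l}{\lambda}_t$ equals the word length of $t\in\bz/_{l\bz}$. Consequently, filtration-preserving actions in the sense of Definition \ref{qsymF} are exactly the $D_l$-commuting actions of \cite{bhs}, so the two universal objects coincide and one may directly quote the answer.

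For a self-contained derivation one would proceed as follows. Writing $\lambda=\stackrel{l}{\lambda}_1$ and letting $\alpha$ be a filtration-preserving action of a compact quantum group $\QG$, the preservation of $V_1=\Lin\{\lambda,\lambda^{-1}\}$ (two-dimensional for $l\geq 3$) forces
\[\alpha(\lambda)=\lambda\ot a+\lambda^{-1}\ot b\]
for some $a,b\in\C(\QG)$. Expanding $\alpha(\lambda)\alpha(\lambda)^*=1\ot 1$ produces
\[1\ot(aa^*+bb^*)+\lambda^2\ot ab^*+\lambda^{-2}\ot ba^*=1\ot 1,\]
and the crucial point is that $\{1,\lambda^2,\lambda^{-2}\}$ are linearly independent precisely for $l\notin\{1,2,4\}$; at $l=4$ one has $\lambda^2=\lambda^{-2}$, so only the combined relation $ab^*+ba^*=0$ survives. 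Under our hypothesis on $l$ one thus recovers $aa^*+bb^*=a^*a+b^*b=1$ together with $ab^*=ba^*=a^*b=b^*a=0$, so $a,b$ are partial isometries with pairwise orthogonal initial and final projections, in particular $ab=ba=0$. The relation $\lambda^l=1$ becomes $a^l+b^l=1$, and higher-$V_k$ preservation then follows automatically from $\alpha(\lambda^k)=\lambda^k\ot a^k+\lambda^{-k}\ot b^k$.

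Coassociativity applied to $\alpha(\lambda)$ yields $\Com(a)=a\ot a+b^*\ot b$ and $\Com(b)=b\ot a+a^*\ot b$, and together with the relations above these force the universal Hopf $*$-algebra to be commutative. Gelfand duality then identifies its spectrum with $G_l=\bz/_{l\bz}\rtimes\bz/_{2\bz}$ via $(k,0)\mapsto(a,b)=(\zeta^k,0)$ and $(k,1)\mapsto(a,b)=(0,\zeta^k)$, where $\zeta=e^{2\pi{i}/l}$, and evaluating $\alpha(\lambda^t)=\lambda^t\ot a^t+\lambda^{-t}\ot b^t$ at these characters reproduces exactly $(k,0)\cdot t=k+t$ and $(k,1)\cdot t=k-t$. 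The main obstacle is the commutativity step, which is the content of Theorem 3.1 of \cite{bhs}; the exclusions $l\in\{1,2,4\}$ are genuinely necessary, since for $l\in\{1,2\}$ the filtration has too few nontrivial subspaces to pin down the action, while for $l=4$ the coincidence $\lambda^2=\lambda^{-2}$ deletes key relations and permits a strictly quantum (Kac--Paljutkin-type) symmetry group.
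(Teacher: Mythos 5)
Your proposal follows essentially the same route as the paper: the substantive content is delegated to Theorem 3.1 of \cite{bhs} (after noting that filtration-preserving actions for the word-length partition coincide with the isometric actions considered there), and the rest is the elementary translation of the presentation $\C(G_l)=\C^*(\bz/_{l\bz})\oplus\C^*(\bz/_{l\bz})$ with the doubling-type coproduct into the semidirect product $\bz/_{l\bz}\rtimes\bz/_{2\bz}$ acting by $(k,0)\cdot t=k+t$ and $(k,1)\cdot t=k-t$, which you carry out correctly. One small caveat in your self-contained sketch: the relations $ab^*=a^*b=0$, $aa^*+bb^*=a^*a+b^*b=1$ alone do not force $ab=ba=0$ (take $a=e_{12}$, $b=e_{21}$ in $M_2(\CC)$), so that step, like commutativity, genuinely rests on the quoted theorem of \cite{bhs} --- which is exactly how the paper argues as well.
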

\begin{proof}
It is shown in Theorem 3.1 of \cite{bhs} that $G_l$
given by the following formulas:
\[ \C(G_l) = \C^*(\bz/_{l\bz}) \oplus \C^*(\bz/_{l\bz}), \]
\[ \Com(A_l) = A_l \ot A_l + B_l \ot B_l^*, \;\; \Com(B_l) = A_l \ot B_l + B_l \ot A_l^*,\]
where we write $A_l= \stackrel{l}{\lambda}_1 \oplus 0$, $B _l =0 \oplus\stackrel{l}{\lambda}_1$.
The action of $G_l$ on $\C^*(\bz/_{l\bz})$ is shown to be given by the prescription:
\begin{equation}
\alpha_l (\stackrel{l}{\lambda}_1) = \stackrel{l}{\lambda}_1 \ot A_l + \stackrel{l}{\lambda}_{l-1} \ot B_l^*.  \end{equation}
The fact that these formulas can be rephrased in the way given in the statement of our theorem follows from an elementary calculation.
\end{proof}

Note that we view above the special case of the doubling construction introduced in Section 2.

Not surprisingly, the limiting quantum symmetry group associated with the limit orthogonal filtration $\mathcal{V}_{\infty}$ of $\C(\Pad)$ also has an analogous form. The restriction of $l>5$ in Theorem \ref{finsym} is here irrelevant, as we only ask about the limit behaviour.

\begin{theorem}
Let $\mathcal{V}_{\infty}$ denote the orthogonal filtration of $\C(\Pad)$ arising via Propositions \ref{limitPad} and \ref{limitPadfilt} and the construction described in Section \ref{gensect}. Then the corresponding quantum symmetry group $G$ is the group $ \Pad \rtimes \bz/_{2\bz}$, with the action of $\bz/_{2\bz}$  on $\Pad$ given by the inverse automorphism ($t\mapsto t^{-1}$). Moreover  the corresponding action of $G$ on the set $\Pad$ is given by the expected formulas:
\[ (k,0) \cdot t = k+t, \;\;\; (k,1) \cdot t = k-t, \;\; k,t \in \Pad.\]
\end{theorem}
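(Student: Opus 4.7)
My plan is to directly invoke Theorem \ref{mainlimit} together with the computation of the finite quantum symmetry groups from Theorem \ref{finsym}. Since each $\C^*(\Gamma_{p^n})$ is abelian, the canonical trace on it is tracial, so Proposition \ref{limitPadfilt} places us in the setting of Theorem \ref{mainlimit}. The conclusion is that the quantum symmetry group associated to $\mathcal{V}_{\infty}$ is the projective limit $\varprojlim\QG_n$, where $\QG_n$ is the quantum symmetry group of $(\C^*(\Gamma_{p^n}),\mathcal{V}_n)$. For $p^n>4$ Theorem \ref{finsym} identifies $\QG_n$ with the \emph{classical} group $G_{p^n}=\bz/_{p^n\bz}\rtimes\bz/_{2\bz}$, and since the system is cofinal in itself (we can always start the limit at any stage), the small exceptional cases $l=1,2,4$ that may occur when $p=2$ are irrelevant for the projective limit.

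The key computation is the identification of the connecting morphisms $\pi_{n,n+1}\colon\C(G_{p^n})\to\C(G_{p^{n+1}})$ produced by the proof of Theorem \ref{mainlimit}. By that construction, $\pi_{n,n+1}$ is the unique Hopf $*$-algebra morphism such that $\alpha_{n+1}\comp\iota_n=(\iota_n\tens\pi_{n,n+1})\comp\alpha_n$. Using $\iota_n(\stackrel{p^n}{\lambda}_1)=\stackrel{p^{n+1}}{\lambda}_p=(\stackrel{p^{n+1}}{\lambda}_1)^p$, the fact that $A_{p^{n+1}}$ and $B_{p^{n+1}}^*$ live in orthogonal direct summands of $\C^*(\Gamma_{p^{n+1}})\oplus\C^*(\Gamma_{p^{n+1}})$ (so their products vanish and the binomial expansion collapses), one computes
\[
\alpha_{n+1}(\stackrel{p^{n+1}}{\lambda}_p)=\stackrel{p^{n+1}}{\lambda}_p\tens A_{p^{n+1}}^{\,p}+\stackrel{p^{n+1}}{\lambda}_{p^{n+1}-p}\tens(B_{p^{n+1}}^*)^p.
\]
Comparing with $(\iota_n\tens\pi_{n,n+1})\comp\alpha_n$ forces $\pi_{n,n+1}(A_{p^n})=A_{p^{n+1}}^{\,p}$ and $\pi_{n,n+1}(B_{p^n})=B_{p^{n+1}}^{\,p}$. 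This is exactly the Gelfand dual of the natural quotient $G_{p^{n+1}}\to G_{p^n}$ induced by the reduction $j_n$ on the first coordinate and the identity on the second.

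Once the connecting morphisms are identified as duals of classical surjective group homomorphisms, the projective limit in the category of compact quantum groups is well known to coincide with the classical inverse limit: indeed, $\varinjlim\C(G_{p^n})\cong\C\bigl(\varprojlim G_{p^n}\bigr)$ with the induced coproduct, and $\varprojlim G_{p^n}=\Pad\rtimes\bz/_{2\bz}$ with inversion action on $\Pad$. The formula for the action of $G$ on $\Pad$ follows from Proposition \ref{indactQG} applied to the actions of each $G_{p^n}$ on $\bz/_{p^n\bz}$ given in Theorem \ref{finsym} together with the compatibility of these finite-level actions under the $j_n$'s. The principal technical step, and the only one requiring actual work, is the direct computation of $\pi_{n,n+1}$ above; the rest is an exercise in Gelfand duality and the universal properties provided by Theorem \ref{mainlimit} and Proposition \ref{indactQG}.
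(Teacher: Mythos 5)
Your proposal is correct and follows essentially the same route as the paper, whose proof simply cites Theorem \ref{mainlimit}, Theorem \ref{finsym} and ``elementary commutative diagram chasing''; your explicit computation of the connecting morphisms $\pi_{n,n+1}$ (identifying them as Gelfand duals of the classical quotients $G_{p^{n+1}}\to G_{p^n}$) is precisely the diagram chasing the paper leaves to the reader, and your handling of the exceptional cases $l=1,2,4$ by cofinality is a legitimate minor addition.
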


\begin{proof}
Follows directly from Theorem \ref{mainlimit}, Theorem \ref{finsym} and some elementary commutative diagram chasing (one needs to verify that all considered actions are compatible with the connecting morphisms of the inductive system under consideration).
\end{proof}

Observe that if $l=4$ the corresponding quantum symmetry group is the free wreath product $\bz_2 \wr_* \bz_2$ (Theorem 3.2 of \cite{bhs}, Proposition  4.6 of \cite{orth}). Of course we cannot have an injective embedding of $\C(\bz_2 \wr_* \bz_2)$ into $\C(\bz_2 \rtimes \bz/_{2\bz})$, as the latter algebra  is commutative, and the former is not. This shows, as mentioned before, that in Theorem \ref{mainlimit} the connecting morphisms from $\C(\QG_n)$ to $\C(\QG_{n+1})$ need not be injective.

\section{Limits of quantum isometry groups of symmetric groups}\label{symm}

In the previous section we discussed an example of a sequence of finitely generated groups leading directly to the situation studied in Section \ref{doubl}: the corresponding group \cst-algebras with the filtrations induced by the word-length formed an inductive limit of \cst-algebras with orthogonal filtrations. This was a consequence of the fact that the  connecting morphisms acting from $\Gamma_{p^n}$ to $\Gamma_{p^{n+1}}$ preserved the level sets of the word-length functions in the suitable sense. In this section we present a different situation, arising in the context of quantum isometry groups of symmetric groups. The latter were computed in the article \cite{Symmetric}. The symmetric groups admit of course natural embeddings $S_n\hookrightarrow S_{n+1}$, but these turn out not to behave well with respect to the word-length induced by the generating sets built of transpositions and thus do not fit into the general framework of Section 2. Nevertheless we will show that one can still study (separately!) the projective limits of the corresponding sequences of quantum isometry groups indexed by even and odd integers, that these limits turn out to be non-isomorphic and moreover can be interpreted as the quantum symmetry groups of $\widehat{S_{\infty}}$ preserving  certain  partitions of $S_{\infty}$.

\subsection{The quantum isometry group of $\sS_n$}\label{qisoSn}

Let us begin by giving an account of the results of \cite[Section 2]{Symmetric} in the language used in the current article. Fix for the moment $n\in \NN\, (n\geq 3)$. Consider the symmetric group on $n$-elements, $\sS_n$, with the generating set
\begin{equation}\label{gener}
\bigl\{\sn_1,\sn_2,\dotsc,\sn_{n-1}\bigr\}
\end{equation}
consisting of the nearest-neighbour transpositions. The quantum symmetry group $\QI(\widehat{\sS_n})$ turns out to be the doubling of $\widehat{\sS_n}$ (see Section \ref{doubl})  with respect to the order-two automorphism $\theta_n\colon\sS_n\to\sS_n$ given by $\theta_n(\sn_i)=\sn_{n-i},\;i=1,\dotsc,n-1$, with the action of this quantum group on $\CC[\sS_n]$ being precisely the one described in Subsection \ref{doublss} (by formula \eqref{coact}). Since the index $n$ will now be extensively used, all objects pertaining to the quantum isometry group of $\sS_n$ will be denoted by symbols decorated with the subscript $n$. Thus $\QG_n$ will denote the quantum isometry group $\QI(\widehat{\sS_n})$. The corresponding \cst algebra will be $\C(\QG_n)$ with comultiplication $\Delta_n\colon\C(\QG_n)\to\CC(\QG_n)\tens\CC(\QG_n)$ (note that since $\C(\QG_n)$ is finite dimensional, it is in fact a Hopf algebra). We have $\C(\QG_n)=\CC[\sS_n]\oplus\CC[\sS_n]$ and the injections of $\CC[\sS_n]$ onto the first and second coordinate in $\C(\QG_n)$ will be denoted by $\xi_n$ and $\eta_n$ respectively. Recall from Subsection \ref{doublss} that the action of $\QG_n$ on $\CC[\sS_n]$ is given by the map $\alpha_n\colon\CC[\sS_n]\to\CC[\sS_n]\tens\C(\QG_n)$ defined as
\[
\alpha_n(a)=\bigl(\id\tens\xi_n+\theta_n\tens[\eta_n\comp\theta_n]\bigr)\comp\Delta_{\CC[\sS_n]}.
\]
As before we have the embedding $\Xi_n\colon\CC[\sS_n]\ni{a}\mapsto\xi_n(a)+\eta_n\bigl(\theta_n(a)\bigr)\in\C(\QG_n)$ which satisfies
\[
(\Xi_n\tens\id)\comp\alpha_n=\Delta_n\comp\Xi_n.
\]

Consider now the linear map $\ph_n\colon\CC[\sS_n]\to\CC[\sS_{n+2}]$ defined on generators as
\[
\ph_n(\sn_i)=\overset{\scriptscriptstyle{n+2}}{s}_{\!\!i+1}.
\]
Then $\ph_n$ is an injective Hopf algebra map which is equivariant: $\theta_{n+2}\comp\ph_n=\ph_n\comp\theta_n$. Denote by $\psi_n$ the canonical extension of $\ph_n$ to a map $\C(\QG_n)\to\C(\QG_{n+2})$ (cf.~Subsection \ref{doublss}). Then the following diagram
\begin{equation}
\label{dn0}
\xymatrix{
\CC[\sS_n]\ar[d]_{\ph_n}\ar[rr]^-{\alpha_n}&&\CC[\sS_n]\tens\C(\QG_n)\ar[d]^{\ph_n\tens\psi_n}\\
\CC[\sS_{n+2}]\ar[rr]_-{\alpha_{n+2}}&&\CC[\sS_{n+2}]\tens\C(\QG_{n+2})
}
\end{equation}
commutes. Moreover it is easily seen that $\ph_n$ maps length-one elements (nontrivial linear combinations of the generators \eqref{gener}) to length-one elements. It turns out that is is not possible to construct such a map acting between consecutive permutation groups, which is related to the fact that the sequence of corresponding quantum symmetry groups does not fit into the framework of Theorem \ref{mainlimit}.

\begin{proposition}\label{non-existence}
Let $n$ be even. There does not exists a linear map $\chi_n\colon\CC[\sS_n]\to\CC[\sS_{n+1}]$ mapping length one elements to length one elements and such that there is a Hopf algebra morphism $\zeta_n\colon\C(\QG_n)\to\C(\QG_{n+1})$ such that the diagram
\begin{equation}
\label{dn1}
\xymatrix{
\CC[\sS_n]\ar[d]_{\chi_n}\ar[rr]^-{\alpha_n}&&\CC[\sS_n]\tens\C(\QG_n)\ar[d]^{\chi_n\tens\zeta_n}\\
\CC[\sS_{n+1}]\ar[rr]_-{\alpha_{n+1}}&&\CC[\sS_{n+1}]\tens\C(\QG_{n+1})
}
\end{equation}
commutes.
\end{proposition}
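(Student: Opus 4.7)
The plan is to argue by contradiction, exploiting the parity asymmetry between $n$ and $n+1$: when $n$ is even the generator $\sn_{n/2}$ is a fixed point of $\theta_n$, whereas $\theta_{n+1}$ has no fixed point among the generators of $\sS_{n+1}$, since $n+1$ is odd. Suppose such $\chi_n$ and $\zeta_n$ existed. The defining formula for $\alpha_n$ together with $\theta_n(\sn_{n/2}) = \sn_{n/2}$ would give
\[
\alpha_n(\sn_{n/2}) \;=\; \sn_{n/2}\tens y_n, \qquad y_n \;:=\; \xi_n(\sn_{n/2}) + \eta_n(\sn_{n/2}),
\]
so that $\sn_{n/2}$ spans an invariant one-dimensional subspace of $\CC[\sS_n]$ under the action $\alpha_n$. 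Commutativity of \eqref{dn1} then forces $v := \chi_n(\sn_{n/2})$ to satisfy $\alpha_{n+1}(v) = v\tens \zeta_n(y_n)$.

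Next I would expand $v = \sum_{j=1}^{n} c_j\, \overset{\scriptscriptstyle{n+1}}{s}_j$ and use the analogous formula
\[
\alpha_{n+1}(\overset{\scriptscriptstyle{n+1}}{s}_j) \;=\; \overset{\scriptscriptstyle{n+1}}{s}_j \tens \xi_{n+1}(\overset{\scriptscriptstyle{n+1}}{s}_j) \,+\, \overset{\scriptscriptstyle{n+1}}{s}_{n+1-j}\tens \eta_{n+1}(\overset{\scriptscriptstyle{n+1}}{s}_{n+1-j}).
\]
Reindexing and comparing the coefficient of each linearly independent basis vector $\overset{\scriptscriptstyle{n+1}}{s}_j$ on the two sides of $\alpha_{n+1}(v) = v\tens y$, where $y := \zeta_n(y_n)$, yields for every $j \in \{1,\dots,n\}$ the identity
\[
c_j\, y \;=\; c_j\, \xi_{n+1}(\overset{\scriptscriptstyle{n+1}}{s}_j) \,+\, c_{n+1-j}\, \eta_{n+1}(\overset{\scriptscriptstyle{n+1}}{s}_j).
\]

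The crucial and hardest step is to rule out all non-trivial solutions of this system by exploiting the direct-sum structure $\C(\QG_{n+1}) = \CC[\sS_{n+1}] \oplus \CC[\sS_{n+1}]$. Projecting on the first summand gives $c_j y_1 = c_j\, \overset{\scriptscriptstyle{n+1}}{s}_j$ for a single $y_1 \in \CC[\sS_{n+1}]$, so $y_1$ is simultaneously equal to $\overset{\scriptscriptstyle{n+1}}{s}_j$ for every $j$ with $c_j\neq 0$; since distinct generators are distinct group elements, this forces at most one index $j_0$ to have $c_{j_0}\neq 0$. Projecting on the second summand gives $c_j y_2 = c_{n+1-j}\, \overset{\scriptscriptstyle{n+1}}{s}_j$, which in turn implies $c_{n+1-j} = 0$ whenever $c_j = 0$; combining the two observations, if $c_{j_0}\neq 0$ then also $c_{n+1-j_0}\neq 0$, so by uniqueness $j_0 = n+1-j_0$, impossible because $n+1$ is odd.

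Hence every $c_j$ must vanish, so $\chi_n(\sn_{n/2}) = 0$, contradicting the assumption that $\chi_n$ sends length-one elements to length-one elements (interpreted, as in the quoted phrase ``nontrivial linear combinations of the generators'', as mapping nonzero length-one elements to nonzero length-one elements). The main obstacle in executing the plan will be carefully tracking the two-summand decomposition of $\C(\QG_{n+1})$ through the coefficient comparisons; once that is done, the parity mismatch between even $n$ and odd $n+1$ finishes the argument immediately.
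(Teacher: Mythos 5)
Your proposal is correct, and it follows the paper's strategy up to the decisive coefficient identity: like the paper, you apply the diagram \eqref{dn1} to the $\theta_n$-fixed generator $\sn_{\frac{n}{2}}$, use the length-one hypothesis to expand its image as a nonzero combination $\sum_j c_j\snn_j$, and compare coefficients of the linearly independent generators to get $c_j\,\zeta_n(\cn)=c_j\,\xi_{n+1}\bigl(\snn_j\bigr)+c_{n+1-j}\,\eta_{n+1}\bigl(\snn_j\bigr)$ for all $j$. Where you diverge is the endgame. The paper picks one $k$ with $\lambda_k\neq0$, uses that $\cn$ is group-like and that $\zeta_n$ is a Hopf (in particular coalgebra) morphism to conclude that $\sigmann_k+\mu\taunn_k$ is group-like, and then computes its coproduct via \eqref{Delst} to force $n+1-k=k$. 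You instead project the identity onto the two direct summands of $\C(\QG_{n+1})=\CC[\sS_{n+1}]\oplus\CC[\sS_{n+1}]$: the first-summand equation pins the nonzero coefficients to a single index $j_0$, the second-summand equation forces $c_{n+1-j_0}\neq0$ as well, and uniqueness gives $j_0=n+1-j_0$, impossible since $n+1$ is odd. Both routes hinge on the same parity obstruction, but yours is slightly more elementary and marginally stronger: it never invokes the comultiplicativity of $\zeta_n$ (only its linearity), whereas the paper's argument genuinely uses the group-like structure; the paper's version, in exchange, makes the role of group-like elements in $\C(\QG_{n+1})$ explicit. Your bookkeeping of the two-summand decomposition and the nonvanishing of $v=\chi_n\bigl(\sn_{\frac{n}{2}}\bigr)$ is exactly the care the argument needs, so the proof goes through as planned.
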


\begin{proof}
For $i=1,\dotsc,n-1$ we denote by $\sigman_i$ and $\taun_i$ the elements $\xi_n(\sn_i)$ and $\eta_n(\sn_i)$ respectively. With this notation we have the following expression for the comultiplication of $\C(\QG_n)$:
\begin{equation}\label{Delst}
\begin{split}
\Delta_n(\sigman_i)&=\sigman_i\tens\sigman_i+\taun_i\tens\taun_{n-i},\\
\Delta_n(\taun_i)&=\sigman_i\tens\taun_i+\taun_i\tens\sigman_{n-i}
\end{split}\;\;\;\;\;\; (i=1,\ldots,n-1).
\end{equation}
We will use analogous notation for $n+1$ and $i=1,\dotsc,n$.

Assume that there exist maps $\chi_n$ and $\zeta_n$ for which \eqref{dn1} is commutative and $\chi_n$ maps length one elements to length one elements. The element $\sn_{\frac{n}{2}}$ is mapped by $\alpha_n$ to $\sn_{\frac{n}{2}}\tens\bigl(\sigman_{\frac{n}{2}}+\taun_{\frac{n}{2}}\bigr)$. Let us denote $\sigman_{\frac{n}{2}}+\taun_{\frac{n}{2}}$ by $\cn$. It is a group-like element of $\C(\QG_n)$.

Since $\chi_n$ maps length one elements to length one elements, we have
\[
\chi\bigl(\sn_{\frac{n}{2}}\bigr)=\sum_{k=1}^n\lambda_{k}\snn_k,
\]
so that
\[
\alpha_{n+1}\bigl(\chi\bigl(\sn_{\frac{n}{2}}\bigr)\bigr)
=\sum_{k=1}^n\lambda_{k}\bigl(\snn_k\tens\sigmann_k+\snn_{n+1-k}\tens\taunn_{n+1-k}\bigr).
\]
It follows that
\[
\chi_n\bigl(\sn_{\frac{n}{2}}\bigr)\tens\zeta_n\bigl(\cn\bigr)
=\sum_{k=1}^n\lambda_{k}\bigl(\snn_k\tens\sigmann_k+\snn_{n+1-k}\tens\taunn_{n+1-k}\bigr)
\]
which we rewrite as
\[
\begin{split}
\sum_{k=1}^n\snn_k\tens\lambda_{k}\zeta_n\bigl(\cn\bigr)
&=\sum_{k=1}^n\lambda_{k}\bigl(\snn_k\tens\sigmann_k+\snn_{n+1-k}\tens\taunn_{n+1-k}\bigr)\\
&=\sum_{k=1}^n\snn_k\tens\bigl(\lambda_{k}\sigmann_k+\lambda_{n+1-k}\tens\taunn_k\bigr)
\end{split}
\]
Clearly this implies that for each $k=1,\dotsc,n$ we have
\[
\lambda_{k}\zeta_n\bigl(\cn\bigr)
=\lambda_{k}\sigmann_k+\lambda_{n+1-k}\tens\taunn_k
\]
and the element on the right hand side is group-like. There must exist a $k$ for which $\lambda_k$ is not zero (otherwise the length-one element $\sn_{\frac{n}{2}}$ would be mapped to $0$ which is not length-one). Dividing out by this $\lambda_k$ and putting $\mu=\tfrac{\lambda_{n+1-k}}{\lambda_k}$ we obtain a group-like element
\[
\zeta_n\bigl(\cn\bigr)=\sigmann_k+\mu\taunn_k
\]
of $\C(\QG_{n+1})$. But by \eqref{Delst}
\[
\Delta\bigl(\sigmann_k+\mu\taunn_k\bigr)
=\sigmann_k\tens\sigmann_k+\taunn_k\tens\taunn_{n+1-k}+
\mu\bigl(\sigmann_k\tens\taunn_k+\taunn_k\tens\sigmann_{n+1-k}\bigr)
\]
while
\[
\bigl(\sigmann_k+\mu\taunn_k\bigr)\tens\bigl(\sigmann_k+\mu\taunn_k\bigr)=
\sigmann_k\tens\sigmann_k+\mu^2\taunn_k\tens\taunn_k+
\mu\bigl(\sigmann_k\tens\taunn_k+\taunn_k\tens\sigmann_k\bigr).
\]
It follows that $\mu^2$ must be equal to $1$, but more importantly, $n+1-k=k$. This is not possible for even $n$.
\end{proof}

It is worth noting that if one relaxes the requirement to preserve length-one elements, then a map satisfying other conditions stated in Proposition \ref{non-existence} exists. Indeed, for each $k\in\NN$ the group homomorphism $\gamma_{2k}\colon\sS_{2k}\to\sS_{2k+1}$ given by
\[
\gamma_{2k}(\skk_i)=
\begin{cases}
\skkj_i&1\leq{i}\leq{k-1},\\
\skkj_k\skkj_{k+1}\skkj_k&i=k,\\
\skkj_{i+1}&k+1\leq{i}\leq{2k-1}.
\end{cases}
\]
is an equivariant injection.

\subsection{Inductive limits} \label{indlim}

Let us consider the two inductive systems of \cst-algebras:
\begin{equation}\label{sys}
\begin{split}
&\xymatrix@1{\CC[\sS_1]\ar[r]^{\ph_1}&\CC[\sS_3]\ar[r]^{\ph_3}&\CC[\sS_5]\ar[r]^{\ph_5}&\dotsm}\,,\\
&\xymatrix@1{\CC[\sS_2]\ar[r]^{\ph_2}&\CC[\sS_4]\ar[r]^{\ph_4}&\CC[\sS_6]\ar[r]^{\ph_6}&\dotsm}\,
\end{split}
\end{equation}
where $(\ph_{2l+1})_{l\in\ZZ_+}$ and $(\ph_{2k})_{k\in\NN}$ are the morphisms introduced in Subsection \ref{qisoSn}. Both inductive limits are isomorphic to $\cst(\sS_\infty)$. On each component we have coactions of $\QG_n$ with appropriate $n$. Let us denote by $\Phi_n$ the canonical map $\CC[\sS_n]\to\cst(\sS_\infty)$. Since in both the systems \eqref{sys} the connecting maps are equivariant for the automorphisms $\theta_n$ of $\CC[\sS_n]$, we obtain automorphisms $\theta_\infty^\odd$ and $\theta_\infty^\ev$ of $\cst(\sS_\infty)$ such that
\[
\theta_\infty^\odd\comp\Phi_{2l+1}=\Phi_{2l+1}\comp\theta_{2l+1}\quad\text{and}\quad
\theta_\infty^\ev\comp\Phi_{2k}=\Phi_{2k}\comp\theta_{2k}
\]
for each $l\in\ZZ_+$, $k\in\NN$.

The  morphisms $\psi_n\colon\C(\QG_n)\to\C(\QG_{n+2})$, introduced in the paragraph before diagram \ref{dn0}, form the following inductive systems:
\begin{subequations}
\begin{align}
&\xymatrix@1{\C(\QG_1)\ar[r]^{\psi_1}&\C(\QG_3)\ar[r]^{\psi_3}
&\C(\QG_5)\ar[r]^{\psi_5}&\dotsm}\,,\label{Alim1}\\
&\xymatrix@1{\C(\QG_2)\ar[r]^{\psi_2}&\C(\QG_4)\ar[r]^{\psi_4}
&\C(\QG_6)\ar[r]^{\psi_6}&\dotsm}\,.\label{Alim2}
\end{align}
\end{subequations}

These are systems compact quantum groups with quantum group morphisms and so, by Theorem \ref{indqg} the respective limits are canonically endowed with compact quantum group structures. Clearly the \cst-algebra inductive limits $\varinjlim\cA_{2l+1}$ and $\varinjlim\cA_{2k}$ are both isomorphic to $\cst(\sS_\infty)\oplus\cst(\sS_\infty)$ which we will for the time being denote by $\sA_\infty$. Let us also denote the canonical maps $\C(\QG_n)\to\sA_\infty$ by $\Psi_n$. The \cst-algebra $\sA_\infty$ has two comultiplications $\Delta_\infty^\odd$ and $\Delta_\infty^\ev$ such that
\[
\Delta_\infty^\odd\comp\Psi_{2l+1}=(\Psi_{2l+1}\tens\Psi_{2l+1})\comp\Delta_{2l+1}\quad\text{and}\quad
\Delta_\infty^\ev\comp\Psi_{2k}=(\Psi_{2k}\tens\Psi_{2k})\comp\Delta_{2k}
\]
for each $l\in\ZZ_+$, $k\in\NN$. Let us denote the two quantum groups $(\sA_\infty,\Delta_\infty^\odd)$ and $(\sA_\infty,\Delta_\infty^\ev)$ by $\QG_\infty^\odd$ and $\QG_\infty^\ev$ respectively. By \eqref{doublH} and \cite[Proposition 3.3]{Wangfree} the Haar measures of $\QG_\infty^\odd$ and $\QG_\infty^\ev$ are actually both equal to the sum $\tfrac{1}{2}(\tau\comp\pr_1+\tau\comp\pr_2)$, where $\tau$ is the canonical (von Neumann) trace on $\cst(\sS_\infty)$ and $\pr_1$ and $\pr_2$ are projections of $\sA_\infty$ onto first and second direct summand. Let us denote this functional by $h_\infty$. It follows that $h_\infty$ is faithful.

The canonical Hopf $*$-algebras inside $\sA_\infty$ for $\QG_\infty^\odd$ and $\QG_\infty^\ev$ are the algebraic direct limits of \eqref{Alim1} and \eqref{Alim2} respectively (this follows e.g.~from uniqueness of this Hopf algebra, cf.~\cite[Theorem 5.1]{bmt}).

By Proposition \ref{indactQG} the diagram
\[
\xymatrix@C=3em{
\dotsm\ar[r]
&\CC[\sS_{n-2}]\ar[ddrrrrr]_(.55){\scriptscriptstyle{(\Phi_{n-2}\tens\Psi_{n-2})\circ\alpha_{n-2}}\quad}\ar[r]^{\ph_{n-2}}
&\CC[\sS_{n}]\ar[ddrrrr]^(.3){\scriptscriptstyle{(\Phi_{n}\tens\Psi_{n})\circ\alpha_{n}}}\ar[r]^{\ph_{n}}
&\CC[\sS_{n+2}]\ar[ddrrr]^-{\quad\scriptscriptstyle{ (\Phi_{n+2}\tens\Psi_{n+2})\circ\alpha_{n+2} }}\ar[r]
&\dotsm&\dotsm\ar[r]&\mathrm{C}^*(\sS_\infty)\\
&&&&&&\!\!\!\!\!\!\!\!\!\!\!\!\!\!\!\dotsm\,\,\,\,\,\,\,\,\,\,\,\,
\\
&&&&&&\mathrm{C}^*(\sS_\infty)\tens\sA_\infty
}
\]
gives rise to $\alpha_\infty^\odd\colon\cst(\sS_\infty)\to\cst(\sS_\infty)\tens\sA_\infty$ and
$\alpha_\infty^\ev\colon\cst(\sS_\infty)\to\cst(\sS_\infty)\tens\sA_\infty$ depending on the parity of the indices. It is easy to check that these are actions of $\QG_\infty^\odd$ and $\QG_\infty^\ev$ on $\cst(\sS_\infty)$. Moreover
\[
(\Phi_{2l+1}\tens\Psi_{2l+1})\comp\alpha_{2l+1}=\alpha_\infty^\odd\comp\Phi_{2l+1},\qquad
(\Phi_{2k}\tens\Psi_{2k})\comp\alpha_{2k}=\alpha_\infty^\ev\comp\Phi_{2k}
\]
for all $l\in\ZZ_+$, $k\in\NN$.

The two quantum dynamical systems $\bigl(\cst(\sS_\infty),\QG_\infty^\odd,\alpha_\infty^\odd\bigr)$ and $\bigl(\cst(\sS_\infty),\QG_\infty^\ev,\alpha_\infty^\ev\bigr)$ are embeddable. Indeed, for each $n$ we have the commutative diagram:
\[
\xymatrix@R=1ex@C=3em{
&&\CC[\sS_n]\tens\C(\QG_n)\ar[dddddd]^{\Xi_n\tens\id}\ar[rd]^(.45){\ph_n\tens\psi_n}\\
&&&\CC[\sS_{n+2}]\tens\C(\QG_{n+2})\ar[dddd]^{\Xi_{n+2}\tens\id}\\
\CC[\sS_n]\ar[r]^{\ph_n}\ar[dd]_{\Xi_n}\ar@<1ex>[rruu]^{\alpha_n}
&\CC[\sS_{n+2}]\ar[dd]^{\Xi_n}\ar[rru]_(.3){\alpha_{n+2}}\\\\
\C(\QG_n)\ar[r]^{\psi_n}\ar@<-1ex>[rrdd]_{\Delta_n}&\C(\QG_{n+2})\ar[rrd]^(.6){\Delta_{n+2}}\\
&&&\C(\QG_{n+2})\tens\C(\QG_{n+2})\\
&&\C(\QG_n)\tens\C(\QG_n)\ar[ru]_{\psi_n\tens\psi_n}
}
\]
It follows easily from properties of inductive limits that we have two injective morphisms
\[
\Xi_\infty^\odd\colon\cst(\sS_\infty)\longrightarrow\sA_\infty,\qquad
\Xi_\infty^\ev\colon\cst(\sS_\infty)\longrightarrow\sA_\infty
\]
which satisfy
\[
(\Xi_\infty^\odd\tens\id)\comp\alpha_\infty^\odd=\Delta_\infty^\odd\comp\Xi_\infty^\odd\quad\text{and}
\quad(\Xi_\infty^\ev\tens\id)\comp\alpha_\infty^\ev=\Delta_\infty^\ev\comp\Xi_\infty^\ev.
\]

One can see that the structures obtained in the two limits (over even and odd integers respectively), i.e.\ $\QG_{\infty}^{\ev}$ and $\QG_{\infty}^{\odd}$, are in fact doublings of the quantum group $\widehat{\sS_\infty}$ with respect to the order-two automorphisms $\theta_\infty^\ev$ and $\theta_\infty^\odd$. Quite clearly also the actions $\alpha_\infty^\ev$ and $\alpha_\infty^\odd$ and embeddings $\Xi_\infty^\ev$ and $\Xi_\infty^\odd$ arise via procedures described in Subsection \ref{doublss}. We can describe $\theta_\infty^\ev$ and $\theta_\infty^\odd$ in more detail: as automorphisms of the Hopf $*$-algebra $\bc[\sS_{\infty}]$ they arise from automorphisms of the group $S_{\infty}$, say $\kappa^{\ev}$ and $\kappa^\odd$. The latter in turn can be respectively identified, once we agree to identify $\sS_{\infty}$ with the group of finitely supported permutations of $\bz$, with the conjugations by the permutations $\pi^{\ev},\pi^\odd \in \textup{Perm}(\bz)$, given by the reflections of $\bz$ with respect to the point $\frac{1}{2}$ and $0$.

We will now prove that the quantum groups $\QG_\infty^\ev$ and $\QG_\infty^\odd$  are non-isomorphic. For that we first need to quote a well-known result due to Schreier and Ulam (\cite{Ulam}, see also Theorem 8.2.A of \cite{Dixon}).

\begin{theorem}\label{conjugate}(\cite{Ulam})
Any automorphism of the group $\sS_{\infty}$, interpreted as a group of finitely supported permutations of a given infinite countable set $X$, is given by a conjugation by a permutation (possibly with infinite support) of $X$.
\end{theorem}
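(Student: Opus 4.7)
The plan is to recover the conjugating permutation $\pi$ of $X$ directly from the action of $\phi$ on transpositions, following the classical Schreier--Ulam strategy. The argument splits into two parts: an algebraic characterisation of the class of transpositions inside $\sS_{\infty}$, and a reconstruction of $\pi$ from the images under $\phi$ of transpositions through a fixed basepoint.

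For the first part I would establish the following characterisation: the conjugacy class $T_{1}$ of transpositions is the unique conjugacy class $C$ of involutions in $\sS_{\infty}$ with the property that the product of any two elements of $C$ has order at most $3$. That $T_{1}$ has this property is the familiar trichotomy -- two transpositions multiply to the identity, a $3$-cycle, or a product of two disjoint transpositions. For every $k\geq 2$ the class $T_{k}$ of products of $k$ disjoint transpositions fails the property, as witnessed in $T_{2}$ by $(a\,b)(c\,d)\cdot(a\,e)(f\,g)=(a\,e\,b)(c\,d)(f\,g)$, of order $6$, and by analogous pairs in every $T_{k}$. Since $\phi$ permutes conjugacy classes and preserves orders of products, $\phi(T_{1})=T_{1}$.

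Next I would fix a basepoint $x_{0}\in X$ and analyse the family $\phi\bigl((x_{0}\,y)\bigr)$ as $y$ ranges over $X\setminus\{x_{0}\}$. Any two of the original transpositions meet in $x_{0}$ and hence have product of order $3$, so any two of the images are transpositions whose product is a $3$-cycle and therefore share exactly one point. For any three indices $y,z,w$ the product $(x_{0}\,y)(x_{0}\,z)(x_{0}\,w)$ is the $4$-cycle $(x_{0}\,w\,z\,y)$, so the product of the three images must likewise have order $4$. A direct check shows that if three pairwise-meeting transpositions formed a triangle $(a\,b),(b\,c),(a\,c)$ their product would collapse to the single transposition $(b\,c)$; this rules out the triangular configuration and leaves only the star configuration, with the three images sharing a common vertex. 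Running the argument across all triples pins down a single point $a_{0}\in X$ lying in every $\phi\bigl((x_{0}\,y)\bigr)$, and I then set $\pi(x_{0})=a_{0}$ and let $\pi(y)$ be the remaining element of $\phi\bigl((x_{0}\,y)\bigr)$ for $y\neq x_{0}$.

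To conclude I would verify that $\pi$ is a bijection of $X$ and that $\phi(\sigma)=\pi\sigma\pi^{-1}$ for every $\sigma\in\sS_{\infty}$. Injectivity of $\pi$ is immediate from injectivity of $\phi$ on transpositions, and surjectivity follows by applying the same construction to $\phi^{-1}$ with basepoint $a_{0}$ and observing that the resulting map is inverse to $\pi$. The conjugation formula holds by construction on transpositions of the form $(x_{0}\,y)$; for a general transposition the identity $(y\,z)=(x_{0}\,y)(x_{0}\,z)(x_{0}\,y)$ reduces the claim to the previous case, and since transpositions generate $\sS_{\infty}$ the formula propagates to the whole group. The main obstacle is the first step -- singling out a purely group-theoretic invariant that distinguishes transpositions from all other involutions -- after which the reconstruction amounts to combinatorial bookkeeping.
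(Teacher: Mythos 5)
The paper does not actually prove this statement --- it is quoted verbatim from Schreier--Ulam \cite{Ulam} and used as a black box in the proof of Theorem \ref{nonisomThm} --- so there is no internal proof to compare against; your proposal supplies the missing classical argument, and it is correct. The two key steps both check out: the class of transpositions is indeed the unique conjugacy class of involutions in $\sS_{\infty}$ in which any two elements multiply to an element of order at most $3$ (your witness $(a\,b)(c\,d)\cdot(a\,e)(f\,g)$, padded by a common product of disjoint transpositions, has order $6$ and rules out every class $T_k$ with $k\geq 2$), and the star-versus-triangle dichotomy for the images of the pencil $\{(x_0\,y):y\neq x_0\}$, decided by the order-$4$ constraint coming from $(x_0\,y)(x_0\,z)(x_0\,w)$, correctly pins down the common point $a_0$ and hence $\pi$; injectivity, surjectivity via the same construction for $\phi^{-1}$, and propagation of the conjugation formula from the transpositions $(x_0\,y)$ to all of $\sS_{\infty}$ are all fine. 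One point worth making explicit if you write this up: your exclusion of the classes $T_k$, $k\geq 2$, uses that $X$ has enough points to support the witnessing pair (nine points already for $T_3$), which is automatic here since $X$ is infinite --- this is precisely the step that fails for $S_6$, where triple transpositions satisfy the same order-$\leq 3$ criterion and the exceptional outer automorphism exists, so the restriction to the infinite (or at least large) case is not cosmetic.
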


In fact the groups $\textup{Aut}(\sS_{\infty})$ and $\textup{Perm}(X)$ are isomorphic, but this stronger statement will not be needed here.
We are now ready to state and prove the non-isomorphism result. Recall the definitions of $\kappa^\ev, \kappa^\odd, \pi^\ev$ and $\pi^\odd$ introduced in the paragraph before Theorem \ref{conjugate}.

\begin{theorem}\label{nonisomThm}
Compact quantum groups  $\QG_\infty^\ev$ and $\QG_\infty^\odd$ are not isomorphic.
\end{theorem}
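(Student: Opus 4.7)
The plan is to reduce the question to Proposition \ref{isomDoubl} and then to a purely combinatorial statement about cycle types in $\textup{Perm}(\ZZ)$. The key structural observation (already noted in Subsection \ref{indlim}) is that $\QG_\infty^\ev$ and $\QG_\infty^\odd$ are both doublings of $\widehat{\sS_\infty}$, meaning $\Pol(\QG_\infty^\ev)$ and $\Pol(\QG_\infty^\odd)$ are obtained via the doubling construction of Subsection \ref{doublss} applied to the Hopf $*$-algebra $\CC[\sS_\infty]$ with respect to the order-two automorphisms $\theta_\infty^\ev$ and $\theta_\infty^\odd$. Since $\sS_\infty$ is a standard example of an ICC group, Proposition \ref{isomDoubl} applies: any isomorphism of compact quantum groups $\QG_\infty^\ev\cong\QG_\infty^\odd$ would force $\theta_\infty^\ev$ and $\theta_\infty^\odd$ to be conjugate as Hopf $*$-algebra automorphisms of $\CC[\sS_\infty]$.

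Next, I would invoke the remark directly following Proposition \ref{isomDoubl}: Hopf $*$-algebra automorphisms of $\CC[\sS_\infty]$ are all induced by group automorphisms of $\sS_\infty$, and their conjugacy as Hopf $*$-algebra maps is equivalent to conjugacy of the underlying group automorphisms. Thus it suffices to show that $\kappa^\ev$ and $\kappa^\odd$ are \emph{not} conjugate in $\textup{Aut}(\sS_\infty)$.

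Now I would bring in the Schreier--Ulam theorem (Theorem \ref{conjugate}): every automorphism of $\sS_\infty$ is realised as conjugation by some $\sigma\in\textup{Perm}(\ZZ)$. A direct check (using that $\sS_\infty$ contains every transposition) shows that the homomorphism $\textup{Perm}(\ZZ)\to\textup{Aut}(\sS_\infty)$, $\sigma\mapsto\textup{conj}(\sigma)$, has trivial kernel, so conjugacy of $\kappa^\ev=\textup{conj}(\pi^\ev)$ and $\kappa^\odd=\textup{conj}(\pi^\odd)$ inside $\textup{Aut}(\sS_\infty)$ is equivalent to conjugacy of $\pi^\ev$ and $\pi^\odd$ inside $\textup{Perm}(\ZZ)$.

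Finally, I would distinguish $\pi^\ev$ and $\pi^\odd$ by cycle type. The reflection $\pi^\ev(n)=1-n$ pairs $\ZZ$ into disjoint transpositions $\{0,1\},\{-1,2\},\{-2,3\},\dotsc$ and has no fixed point, while $\pi^\odd(n)=-n$ fixes $0$ and pairs the remaining integers into transpositions. Two permutations of a countable set are conjugate in the full symmetric group if and only if they have the same cycle type, so $\pi^\ev$ and $\pi^\odd$ are not conjugate in $\textup{Perm}(\ZZ)$, completing the argument. The only non-routine step is the reduction to $\textup{Perm}(\ZZ)$-conjugacy, and even that is essentially the content of the already-quoted Schreier--Ulam theorem together with the trivial centralizer computation; everything else is bookkeeping with invariants that Proposition \ref{isomDoubl} has isolated.
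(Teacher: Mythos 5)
Your proposal is correct and follows essentially the same route as the paper: reduce via Proposition \ref{isomDoubl} (using that $\sS_\infty$ is ICC) and the remark that Hopf $*$-algebra automorphisms of $\CC[\sS_\infty]$ come from group automorphisms, then apply Schreier--Ulam to realise the intertwining automorphism as conjugation by some $\pi\in\textup{Perm}(\ZZ)$, and finally distinguish $\pi^\ev$ from $\pi^\odd$ by the fixed-point count. The only difference is that you spell out the trivial-kernel (centralizer) step that the paper leaves implicit, which is a harmless refinement rather than a new argument.
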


\begin{proof}
To prove the theorem we need to show that the Hopf $*$-algebras arising as doublings of $\bc[\sS_{\infty}]$ induced by the automorphisms $\theta_\infty^\ev$ and $\theta_\infty^\odd$  are not isomorphic. We will argue by contradiction. As $\sS_{\infty}$ is an ICC group, Proposition \ref{isomDoubl} implies that if $\QG_\infty^\ev$ and $\QG_\infty^\odd$ are isomorphic, then there exists an automorphism of $\sS_{\infty}$ intertwining the automorphisms $\kappa^\ev$ and $\kappa^\odd$.
By Theorem \ref{conjugate} this means that (say we identify $\sS_{\infty}$ with the group of finitely supported permutations of $\bz$) there exists a permutation $\pi$ of $\bz$ such that for each $\sigma\in\sS_{\infty}$ there is
\[ \pi \kappa^\ev (\sigma) \pi^{-1} = \kappa^\odd (\pi \sigma \pi^{-1}).\]
Recalling that $\kappa^\ev, \kappa^{\odd}$ are given by conjugations with permutations $\pi^\ev$ and $\pi^{\odd}$, we deduce that the permutations  $\pi^\ev$ and $\pi^{\odd}$ are themselves conjugate (via $\pi$). This however is not possible, as $\pi^{\odd}$ fixes a point in $\bz$ and $\pi^{\ev}$ does not.
\end{proof}

\subsection{Interpretation of $\QG_\infty^\odd$ and $\QG_\infty^\ev$  as the quantum symmetry group of $\widehat{\sS_{\infty}}$ preserving certain partitions}

The group $\sS_{\infty}$ is generated by transpositions, and although the latter do not form a finite generating set, we will still denote the resulting word-length function on $\sS_{\infty}$ by $l$. As $l$ has now infinite level sets, we need some further idea to produce a partition of  $\sS_{\infty}$ into finite subsets. To this end we interpret $\sS_{\infty}$ as the group of all finite permutations of $\bz$ and introduce the following function $t:S_{\infty} \to \NN_{0}$: for each $\sigma  \in \sS_{\infty}$ put $t(\sigma)=\min\{n\in \NN_0: \sigma(k)=k \textup{ if } |k|>n\}$. Consider further the following collection of sets:
\[ F_{n,m}=\{\sigma \in \sS_{\infty}: l(\sigma) = n, t(\sigma)=m\}, \;\;\; n, m \in \NN_0.\]
It is easy to check that $\mathcal{F}=\{F_{n,m}:n,m\in \NN_0\}$ forms a partition of $\sS_{\infty}$ into finite sets.

\begin{theorem} \label{oddSymmetry}
The quantum group $\QG_\infty^\odd$ is isomorphic to the quantum symmetry group of $\widehat{\sS_{\infty}}$ preserving the partition $\mathcal{F}$ of $\sS_\infty$ introduced in the paragraph above.
\end{theorem}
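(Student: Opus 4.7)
The plan is to verify that $\QG_\infty^\odd$, together with the action $\alpha_\infty^\odd$ constructed in Subsection \ref{indlim}, satisfies the universal property defining the quantum symmetry group of $\widehat{\sS_\infty}$ preserving $\mathcal{F}$. By uniqueness of universal objects this will yield the claimed isomorphism.

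First, I would check that $\alpha_\infty^\odd$ preserves each subspace $V^{\mathcal{F}}_{n,m}:=\Lin\{F_{n,m}\}$. As noted in Subsection \ref{indlim}, $\QG_\infty^\odd$ is the doubling of $\widehat{\sS_\infty}$ with respect to $\theta_\infty^\odd$, which corresponds to the group automorphism $\kappa^\odd$ of $\sS_\infty$, namely the reflection of $\ZZ$ through $0$. Applying the doubling formula \eqref{coact}, for any $\sigma\in\sS_\infty$ the element $\alpha_\infty^\odd(\sigma)\in\cst(\sS_\infty)\tens\sA_\infty$ is a sum of a term whose first tensor leg is $\sigma$ and one whose first tensor leg is $\kappa^\odd(\sigma)$. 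Since $\kappa^\odd$ sends the nearest-neighbour transposition $(k,k+1)$ to $(-k-1,-k)$, it preserves the word-length $l$; being a reflection through $0$, it manifestly preserves $t$. Consequently $\kappa^\odd(F_{n,m})=F_{n,m}$, and both summands above lie in $V^{\mathcal{F}}_{n,m}\odot\C(\QG_\infty^\odd)$ whenever $\sigma\in F_{n,m}$.

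Next I would verify the universal property. Take an arbitrary compact quantum group $\QH$ equipped with a filtration-preserving action $\beta\colon\cst(\sS_\infty)\to\cst(\sS_\infty)\tens\C(\QH)$. For each $l\geq 0$ the finite-dimensional subalgebra $\Phi_{2l+1}(\CC[\sS_{2l+1}])\subset\cst(\sS_\infty)$ is precisely the linear span of permutations $\sigma$ with $t(\sigma)\leq l$, and hence coincides with the direct sum $\bigoplus_{n,\,m\leq l}V^{\mathcal{F}}_{n,m}$. Thus $\beta$ restricts to an action $\beta^{(l)}$ of $\QH$ on $\CC[\sS_{2l+1}]$, and this restricted action preserves the word-length filtration; here I would invoke the standard Coxeter-theoretic fact that the word-length of an element of the parabolic subgroup $\sS_{2l+1}\subset\sS_\infty$ coincides with its word-length in $\sS_\infty$. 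Universality of $\QG_{2l+1}=\QI(\widehat{\sS_{2l+1}})$ then produces a unique morphism $\gamma_l\colon\C(\QG_{2l+1})\to\C(\QH)$ such that $(\id\tens\gamma_l)\comp\alpha_{2l+1}=\beta^{(l)}$.

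The compatibility $\gamma_{l+1}\comp\psi_{2l+1}=\gamma_l$ then follows from the uniqueness part of the universal property of $\QG_{2l+1}$: one checks, using the commutative diagram \eqref{dn0} together with the injectivity of $\ph_{2l+1}$ and the obvious restriction identity $(\ph_{2l+1}\tens\id)\comp\beta^{(l)}=\beta^{(l+1)}\comp\ph_{2l+1}$, that $\gamma_{l+1}\comp\psi_{2l+1}$ satisfies the same intertwining identity that characterises $\gamma_l$. Lemma \ref{indqg} then supplies a unique morphism $\lambda\colon\C(\QG_\infty^\odd)\to\C(\QH)$ with $\lambda\comp\Psi_{2l+1}=\gamma_l$ for every $l$, and the required intertwining $(\id\tens\lambda)\comp\alpha_\infty^\odd=\beta$ holds on each $\Phi_{2l+1}(\CC[\sS_{2l+1}])$ by construction, extending to all of $\cst(\sS_\infty)$ by density and continuity. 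Uniqueness of $\lambda$ is inherited from uniqueness at each previous stage. The main technical obstacle is the diagram chase producing the compatibility of the $\gamma_l$'s; once that is carried out, the projective-limit machinery of Lemma \ref{indqg} combined with Proposition \ref{indactQG} delivers the result.
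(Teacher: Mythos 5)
Your proposal is correct and follows essentially the same route as the paper's proof: check that $\kappa^\odd$ preserves both $l$ and $t$ so that $\alpha_\infty^\odd$ is filtration preserving, identify $\Phi_{2l+1}(\CC[\sS_{2l+1}])$ with the span of $\{\sigma: t(\sigma)\le l\}$, restrict the given filtration-preserving action there, obtain compatible morphisms from the universality of $\QG_{2l+1}$ via the diagram \eqref{dn0} and injectivity of $\ph_{2l+1}$, and pass to the projective limit using Lemma \ref{indqg} plus a density argument. The only (cosmetic) difference is the final packaging: you verify the universal property of $(\QG_\infty^\odd,\alpha_\infty^\odd)$ directly against an arbitrary $\QH$ and invoke uniqueness of final objects, whereas the paper constructs the two morphisms between $\QG_\infty^\odd$ and the universal object $\QG$ given by Theorem \ref{qsymV} and uses faithfulness of both canonical actions to show they are mutually inverse.
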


\begin{proof}
Denote the quantum symmetry group of $\widehat{\sS_{\infty}}$ preserving the partition $\mathcal{F}$ by $\QG$, and let its canonical action (in the algebraic form) on the dual of $\sS_{\infty}$ be given by $\alpha:\bc[\sS_{\infty}]\to \bc[\sS_{\infty}]\odot \Pol(\QG)$ (the fact that $\bc[\sS_{\infty}]$ is the domain of the algebraic version of this action can be deduced from the results of \cite[Section 2]{orth}, cf.~also \cite[Proposition 2.2]{dz}).
Recall that on the other hand the action of $\QG_\infty^\odd$ on the dual of $\sS_{\infty}$, $\alpha_\infty^\odd:  \bc[\sS_{\infty}]\to \bc[\sS_{\infty}]\odot \Pol(\QG_\infty^\odd)$, is given (see formula \ref{coact} and the notations introduced in Subsection \ref{indlim}) by the prescription
\[ \alpha_\infty^\odd = (\id \ot \xi + \kappa^{\odd} \ot \eta \circ \kappa^{\odd})\circ \Com,\]
where $\Com$ denotes the coproduct of $\bc[\sS_{\infty}]$. Thus to check that the action $\alpha_\infty^\odd$ preserves the partition $\mathcal{F}$ it suffices to verify that $\kappa^{\odd}$ preserves individual sets in $\mathcal{F}$. This however is easy to see, as $\kappa^{\odd}$ maps transpositions to transpositions, and also fixes the function $t$ introduced before the formulation of the theorem. Thus   $\QG_\infty^\odd$ acts on $\widehat{\sS_{\infty}}$ in the $\mathcal{F}$ preserving fashion and there exists a (unique) morphism $\pi$ from $\QG_\infty^\odd$ to $\QG$ such that
\begin{equation} (\id \ot \pi) \circ \alpha = \alpha_\infty^\odd.\label{morph1}\end{equation}
To obtain the existence of the inverse morphism  to $\pi$ we will exploit the universal property of $\QG_\infty^\odd$ as a projective limit.

We identify, for each $n\in \NN$, the group $\sS_{2n+1}$ with the group of permutations of the set $\{-n,\ldots,n\}$ and denote the resulting embedding of $\bc[\sS_{2n+1}]$ into $\bc[\sS_{\infty}]$ by $\varphi_{2n+1,\infty}$. Note that in this identification the transposition $\sigmarnn_i$ corresponds to the transposition swapping points $-n+i-1$ and $-n+i$, which in turn implies that we have (in the notation of Subsection \ref{qisoSn}) $\varphi_{2n+1,\infty} =  \varphi_{2n+3,\infty} \circ \varphi_{2n+1}$. This further implies that on the image of $\bc[S_{2n+1}]$ with respect to $\varphi_{2n+1,\infty}$ (contained in the image of $\bc[S_{2n+3}]$ with respect to $\varphi_{2n+3,\infty}$) the following equality holds:
\begin{equation} \label{varphiinv} \varphi_{2n+1} \circ (\varphi_{2n+1,\infty})^{-1}  = (\varphi_{2n+3,\infty} )^{-1}.\end{equation}
Observe now that as the action $\alpha$ of $\QG$ on $\bc[\sS_{\infty}]$ preserves the partition $\mathcal{F}$, it also preserves  the subspace $\varphi_{2n+1,\infty}(\bc[\sS_{2n+1}])$ (which can be identified as the span of these elements $\gamma \in \sS_{\infty}$ for which $t(\gamma)\leq n$). This means that we can consider the following action of $\QG$ on $\bc[\sS_{2n+1}]$, which we will denote by $\beta_{2n+1}$:
\[ \beta_{2n+1} = ((\varphi_{2n+1,\infty})^{-1} \ot \id) \circ \alpha \circ \varphi_{2n+1,\infty}.\]
Formula \eqref{varphiinv} implies that for all $n \in \NN$
\begin{equation} \label{betaint} \beta_{2n+3} \circ \varphi_{2n+1} = (\varphi_{2n+1} \ot \id) \circ \beta_{2n+1}.\end{equation}
As $\alpha$ preserves the length function given by transpositions, the action $\beta_{2n+1}$ preserves the transposition
-
induced length function of $\sS_{2n+1}$. Thus there is a (unique) morphism $\sigma_{2n+1}$ from $\QG$ to $\QG_{2n+1}$ such that
\[ (\id \ot \sigma_{2n+1}) \circ \alpha_{2n+1} = \beta_{2n+1},\]
where $\alpha_{2n+1}$ is the canonical action of $\QG_{2n+1}$ on $\bc[\sS_{2n+1}]$. As each $\varphi_{2n+1}$ is injective, it follows also that $\sigma_{2n+1}$ from $\QG$ to $\QG_{2n+1}$ is the unique morphism such that
\[ (\varphi_{2n+1} \ot \sigma_{2n+1}) \circ \alpha_{2n+1} = (\varphi_{2n+1} \ot \id) \circ \beta_{2n+1}.\]
We now want to show that the morphisms $(\sigma_{2n+1})_{n=1}^{\infty}$ form a compatible family with respect to the embeddings $\psi_{2n+1}:\C(\QG_{2n+1}) \to \C(\QG_{2n+3})$ (see Subsection \ref{qisoSn}). To this end consider the following computation
\begin{align*} (\varphi_{2n+1} \ot (\sigma_{2n+3} \circ \psi_{2n+1})) \circ \alpha_{2n+1} &= (\id \ot \sigma_{2n+3})\circ \alpha_{2n+3} \circ \varphi_{2n+1} \\&=
\beta_{2n+3} \circ \varphi_{2n+1} = (\varphi_{2n+1} \ot \id) \circ \beta_{2n+1},
\end{align*}
where in the first equality we used the commutativity of the diagram \eqref{dn0}, in the second the defining property of $\sigma_{2n+3}$, and in the third the relation \eqref{betaint}.  Due to the uniqueness property of $\sigma_{2n+1}$ mentioned above, we deduce that $\sigma_{2n+3} \circ \psi_{2n+1} = \sigma_{2n+1}$ for each $n \in \NN$. This, as $\QG_{\infty}^{\odd}$ is the projective limit of $(\QG_{2n+1})_{n=1}^{\infty}$ with respect to the connecting morphisms $(\psi_{2n+1})_{n=1}^{\infty}$, implies that there exists a unique morphism $\sigma$ from $\QG$ to $\QG_{\infty}$ such that for each $n\in \NN$ there is $\sigma_{2n+1} = \sigma \circ \psi_{2n+1, \infty}$.
It remains to check that $\sigma$ satisfies
\begin{equation} (\id \ot \sigma) \circ \alpha_\infty^\odd = \alpha.\label{morph2}\end{equation}
To this end fix $n\in \NN$ and consider the following computation:
\begin{align*}
(\id \ot \sigma) \circ \alpha_\infty^\odd \circ \varphi_{2n+1,\infty} &= (\id \ot \sigma) \circ (\varphi_{2n+1,\infty} \ot \psi_{2n+1,\infty})\circ \alpha_n
\\&= (\varphi_{2n+1,\infty} \ot \sigma_{2n+1})\circ \alpha_{2n+1} = (\varphi_{2n+1,\infty} \ot \id)\circ \beta_{2n+1} = \alpha \circ \varphi_{2n+1,\infty} \end{align*}
where the first equality follows from the properties of the limit actions described in Proposition \ref{indactQG}, whereas the second, the third and the fourth respectively from the defining properties of $\sigma$, $\sigma_{2n+1}$ and $\beta_{2n+1}$. As $\bigcup_{n\in\NN} \varphi_{2n+1,\infty}(\bc[\sS_{2n+1}])$ is dense in $C^*(\sS_\infty)$, the formula \eqref{morph2} follows.

Finally, as both actions $\alpha$ and $\alpha_\infty^\odd$ are faithful, respectively by Theorem \ref{qsymV} and Proposition \ref{indactQG}, equalities \eqref{morph1} and \eqref{morph2} imply that the morphisms $\sigma$ and $\pi$ are inverse to each other, we conclude that the quantum groups $\QG$ and $\QG_\infty^\odd$ are isomorphic.
\end{proof}

A similar result is true for the quantum group $\QG_\infty^\ev$. This time we need to consider a slightly different partition of $\sS_{\infty}$, which we will now briefly describe. Interpret $\sS_{\infty}$ as the group of all finite permutations of  $\bz\setminus\{0\}$ and define the following function  $t':S_{\infty} \to \NN_{0}$: for each $\sigma  \in \sS_{\infty}$, we put $t'(\sigma)=\min\{n\in \NN_0: \sigma(k)=k \textup{ if } |k|>n\}$. Note that $t'$ is indeed a different function than $t$ introduced before! Further consider the following collection of sets:
\[ F'_{n,m}=\{\sigma \in \sS_{\infty}: l(\sigma) = n, t'(\sigma)=m\}, \;\;\; n, m \in \NN_0.\]
It is easy to check that $\mathcal{F'}=\{F'_{n,m}:n,m\in \NN_0\}$ forms a partition of $\sS_{\infty}$ into finite sets.

The following theorem can be proved in the same way as Theorem \ref{oddSymmetry}. We leave the details to the reader.

\begin{theorem} \label{evSymmetry}
The quantum group $\QG_\infty^\ev$ is isomorphic to the quantum symmetry group of $\widehat{\sS_{\infty}}$ preserving the partition $\mathcal{F'}$ of $\sS_\infty$ introduced above.
\end{theorem}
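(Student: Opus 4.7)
The plan is to follow the proof of Theorem \ref{oddSymmetry} essentially verbatim, with the realisation of $\sS_{\infty}$ as the group of finitely supported permutations of $\bz\setminus\{0\}$ and the identification, for each $k\in\NN$, of $\sS_{2k}$ with the subgroup of permutations supported on the $2k$-element set $\{-k,\ldots,-1,1,\ldots,k\}$. Under this identification the generator $\skk_i$ becomes the transposition of the $i$-th and $(i+1)$-st elements of this set (in particular $\skk_k$ swaps $-1$ and $1$), and the natural chain of inclusions $\sS_{2k}\hookrightarrow\sS_{2k+2}\hookrightarrow\dotsb$ coincides with the morphisms $\varphi_{2k}$ of Subsection \ref{qisoSn}. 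Consequently the induced embeddings $\varphi_{2k,\infty}\colon\bc[\sS_{2k}]\to\bc[\sS_{\infty}]$ satisfy $\varphi_{2k,\infty}=\varphi_{2k+2,\infty}\comp\varphi_{2k}$ and their union is dense in $\C^*(\sS_{\infty})$.

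The first step is to verify that the action $\alpha_\infty^\ev$ preserves the filtration associated with $\mathcal{F'}$. By formula \eqref{coact}
\[
\alpha_\infty^\ev=\bigl(\id\tens\xi+\kappa^\ev\tens[\eta\comp\kappa^\ev]\bigr)\comp\Com_{\bc[\sS_\infty]},
\]
so it suffices to show that $\kappa^\ev$ leaves each set $F'_{n,m}$ invariant. Under the present identification $\kappa^\ev$ is conjugation by the involution $r\colon\bz\setminus\{0\}\to\bz\setminus\{0\}$ sending $m$ to $-m$; this map sends transpositions to transpositions (preserving the length $l$) and satisfies $|r(k)|=|k|$ (preserving $t'$). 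The universal property of $\QG$ -- which denotes the quantum symmetry group of $\widehat{\sS_\infty}$ preserving $\mathcal{F'}$ with canonical action $\alpha$ -- then yields a unique morphism $\pi\colon\QG_\infty^\ev\to\QG$ with $(\id\tens\pi)\comp\alpha=\alpha_\infty^\ev$.

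The opposite direction transcribes the second half of the proof of Theorem \ref{oddSymmetry}. Preservation of $\mathcal{F'}$ by $\alpha$ entails invariance of the subspace $\varphi_{2k,\infty}(\bc[\sS_{2k}])=\Lin\{\gamma\in\sS_\infty:t'(\gamma)\leq k\}$, so that
\[
\beta_{2k}=(\varphi_{2k,\infty}^{-1}\tens\id)\comp\alpha\comp\varphi_{2k,\infty}
\]
defines a transposition-length-preserving action of $\QG$ on $\bc[\sS_{2k}]$, producing a morphism $\sigma_{2k}\colon\QG\to\QG_{2k}$. The compatibility $\sigma_{2k+2}\comp\psi_{2k}=\sigma_{2k}$ follows from the commutativity of diagram \eqref{dn0} for even indices (which is part of the construction of the inductive system \eqref{Alim2}) combined with uniqueness in the defining property of $\sigma_{2k}$. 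The universal property of $\QG_\infty^\ev=\varprojlim\QG_{2k}$ then yields a unique morphism $\sigma\colon\QG\to\QG_\infty^\ev$ with $\sigma_{2k}=\sigma\comp\psi_{2k,\infty}$ for every $k\in\NN$.

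The identity $(\id\tens\sigma)\comp\alpha_\infty^\ev=\alpha$ is obtained, as at the end of the proof of Theorem \ref{oddSymmetry}, by composing on the right with each $\varphi_{2k,\infty}$ and invoking Proposition \ref{indactQG} together with the defining properties of $\sigma$, $\sigma_{2k}$ and $\beta_{2k}$; density of $\bigcup_{k\in\NN}\varphi_{2k,\infty}(\bc[\sS_{2k}])$ in $\C^*(\sS_\infty)$ then extends the equality everywhere. Faithfulness of both $\alpha$ and $\alpha_\infty^\ev$ forces $\pi$ and $\sigma$ to be mutually inverse. The only step requiring genuine care is the initial combinatorial identification: one must verify that realising $\sS_{2k}$ as permutations of $\{-k,\ldots,-1,1,\ldots,k\}$ is simultaneously compatible with the chain maps $\varphi_{2k}$ and with the description of $\theta_{2k}$ as conjugation by $r$. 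Once this is in place, every subsequent step is a direct translation of the odd-index argument.
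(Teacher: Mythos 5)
Your proposal is correct and follows exactly the route the paper intends: the paper's "proof" of Theorem \ref{evSymmetry} is simply the remark that it can be proved in the same way as Theorem \ref{oddSymmetry}, and you have carried out precisely that transcription, including the key identification of $\sS_{2k}$ with permutations of $\{-k,\dotsc,-1,1,\dotsc,k\}$ making $\kappa^\ev$ conjugation by $m\mapsto-m$ and the maps $\varphi_{2k}$ the natural inclusions.
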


\end{document}